\newtheorem{theorem}{Theorem}[section]
\newtheorem{lemma}[theorem]{Lemma}
\newtheorem{proposition}[theorem]{Proposition}
\newtheorem{corollary}[theorem]{Corollary}
\theoremstyle{definition}
\newtheorem{example}[theorem]{Example}
\numberwithin{equation}{section}
\newcommand{\set}[1]{\left\{#1\right\}}
\newcommand{\paren}[1]{\left(#1\right)}
\newcommand{\R}{{\mathbb R}}
\newcommand{\inner}[2]{\left\langle #1, #2 \right\rangle}
\author[1]{Huiyuan Guo}
\author[2]{Juan Jos\'e Maul\'en}
\author[1]{Juan Peypouquet}
\affil[1]{Bernoulli Institute, University of Groningen, the Netherlands}
\affil[2]{Center for Mathematical Modeling, University of Chile, Chile}
\title{A Speed Restart Scheme for a Dynamical System with Hessian-Driven Damping and Three Constant Coefficients}
\begin{document}
	
	\maketitle
	
	\begin{abstract}
		\setlength{\parindent}{0pt}
		
		\noindent In this paper, we study a speed restart scheme for an inertial system with Hessian-driven damping. We establish a linear convergence rate for the function values along the restarted trajectories without assuming the strong convexity of the objective function. Our numerical experiments show improvements in the convergence rates, both for the continuous-time dynamics, and when applied to inertial algorithms as a heuristic. \\
		
		\textbf{Keywords:} Convex optimization; inertial methods; speed restart. \\
		
		\textbf{MSC2020:} 37N40 $\cdot$ 90C25 $\cdot$ 65K10 (primary). 34A12 (secondary).
		
	\end{abstract}
	
	\section{Introduction}
	
	The use of inertial methods is a popular first-order approach to smooth convex optimization problems. Their design stems from the seminal work of Polyak 
	\cite{polyak1964some} (see also \cite{Polyak1987}) on the numerical approximation of minimizers of a smooth and strongly convex function $f$. The {\it Heavy Ball Method}, defined by iterations of the form
	\begin{equation}\tag{HBM}\label{eq:HBM}
		x_{k+1}=x_k+\theta(x_k-x_{k-1})-h\nabla f(x_k),\qquad \theta>0,
	\end{equation}
	is a variant of {\it gradient descent} motivated in \cite{polyak1964some} as a finite difference discretization of the {\it Heavy Ball with Friction} dynamics, namely
	\begin{equation}\tag{HBF}\label{eq:HBF}
		\ddot{x}(t)+\alpha \dot{x}(t)+\nabla f(x(t))=0, \qquad\alpha>0,
	\end{equation}
	when $\theta\sim 1-\alpha h$. Naturally, the coefficients have a major impact on the behavior of the solutions of \eqref{eq:HBF} and the sequences generated by \eqref{eq:HBM}, and they must be properly tuned for good performance. The (non-strongly) convex case was further studied in \cite{Alvarez2000}
	. \\
	
	In \cite{N1983}, Nesterov proposed a new variant of gradient descent, which is similar$-$but different$-$to \eqref{eq:HBM}, comprising two subiterations
	$$
	y_k=x_k+\theta_k(x_k-x_{k-1}),\qquad x_{k+1}=y_k-h\nabla f(y_k),\qquad \theta_k>0,
	$$
	and oriented to minimizing (non-strongly) convex functions. Unlike in \eqref{eq:HBM}, the parameters are not constant, but vary along the iterations in a precise fashion. It was later shown in \cite{SBC16} that Nesterov's method can be interpreted as a discretization of the inertial system with {\it Asymptotic Vanishing Damping}, namely
	\begin{equation}\tag{AVD}\label{eq:AVD}
		\ddot{x}(t)+\frac{\alpha}{t}\dot{x}(t)+\nabla f(x(t))=0,    \qquad \alpha>0.
	\end{equation}
	A constant-parameter version of Nesterov's method was proposed in \cite{N2004} for strongly convex functions. It can also be interpreted as a discretization of \eqref{eq:HBF}, different from the one giving \eqref{eq:HBM}. \\
	
	Algorithms designed upon the continuous-time models \eqref{eq:HBF} and \eqref{eq:AVD} often present strong oscillations, an undesirable property in view of the high computational cost associated with frequent evaluations of the objective function, especially in high dimensions, if one wanted to select the {\it best iterate up to iteration $k$}. In order to reduce the oscillatory behavior frequently experienced by \eqref{eq:HBF}, a {\it Damped Inertial Newton-like} system, given by
	\begin{equation}\tag{DIN}\label{eq:din_alpha_beta}
		\ddot{x}(t)+\alpha\dot{x}(t)+\beta\nabla^{2} f(x(t))\dot{x}(t)+\nabla f(x(t))=0,    
	\end{equation}
	was proposed and analyzed in \cite{AABR2002}. A {\it Hessian-driven damping} term, whose inspiration comes from the {\it Continuous Newton} method of \cite{Alvarez1998}, is added to \eqref{eq:HBF}. A similar approach was used in \cite{APR16}, where the authors study the system governed by
	\begin{equation}\tag{DIN-AVD}\label{eq:DIN-AVD}
		\ddot{x}(t)+\frac{\alpha}{t}\dot{x}(t)+\beta\nabla^{2} f(x(t))\dot{x}(t)+\nabla f(x(t))=0, \qquad  \alpha,\beta>0,
	\end{equation}
	in order to mitigate the oscillations that are typical of \eqref{eq:AVD}. In this work, we study a {\it Weighted Inertial Newton-like} system, given by
	\begin{equation}\tag{WIN}\label{ORI_intr}
		\ddot{x}(t)+\alpha\dot{x}(t)+\beta\nabla^{2} f(x(t))\dot{x}(t)+\gamma\nabla f(x(t))=0,
	\end{equation}
	where $\alpha>0$, $\beta\geq 0$ and $\gamma>0$. Although \eqref{ORI_intr} can be reduced to \eqref{eq:din_alpha_beta} by a reparameterization of time \cite{ACR19,ABCR22,ACR2019} and a renaming of the coefficients, we prefer to keep the explicit dependence on $\gamma$ for explainability purposes, and for ease of translation into algorithmic implementations.

	\subsubsection*{A simple but illustrative case study}
	Pick $\rho>1$, and define the function $f:\R^n\to\R$ by 
	\begin{equation} \label{E:example_function}
		f(x)=\frac{1}{2}(x_1^2 + \rho x_2^2 + \rho^2 x_3^2 + \ldots+\rho^{n-1}x_n^2),    
	\end{equation}
	which is quadratic, $1$-strongly convex and $\rho^{n-1}$-smooth. The differential equations described above are linear, but become worse conditioned as $\rho$ becomes larger. Moreover, certain combinations of the coefficients will produce oscillatory solutions in the inertial systems. This will be the case, for instance, if 
	\begin{equation}\label{eq:gamma_example}
		\gamma = \dfrac{1}{4\rho^i}(\alpha+\rho^i\beta)^2 + \varepsilon, \qquad \hbox{for some $i\in\{0,\dots,n-1\}$ and $\varepsilon>0$},
	\end{equation}
	in \eqref{ORI_intr}.
	However, these oscillations appear considerably less pronounced when the Hessian-driven damping term is present, as shown in Figure \ref{DINabr}. Concerning the comparison with \eqref{eq:DIN-AVD}, Figure \ref{fig:comp_DINAVD} exhibits a noticeable improvement when using a constant coefficient for $\dot x$ instead of a vanishing one, with the former producing a more pronounced slope in logarithmic scale, which translates into a better linear rate. Since $f$ is strongly convex, this would be expected if one adapted the said coefficient to the strong convexity parameter (common practice would dictate $\alpha<2$ in both cases). Instead, we have used $\alpha=3$, which is the typical choice corresponding to \eqref{eq:AVD} and \eqref{eq:DIN-AVD} (see, for instance, \cite{SBC16,APR16,ACPR18,attouch2016rate}).

		\begin{figure}[htbp]
			\centering
			\subfigure[$\rho=10$]{\includegraphics[width=0.48\textwidth]{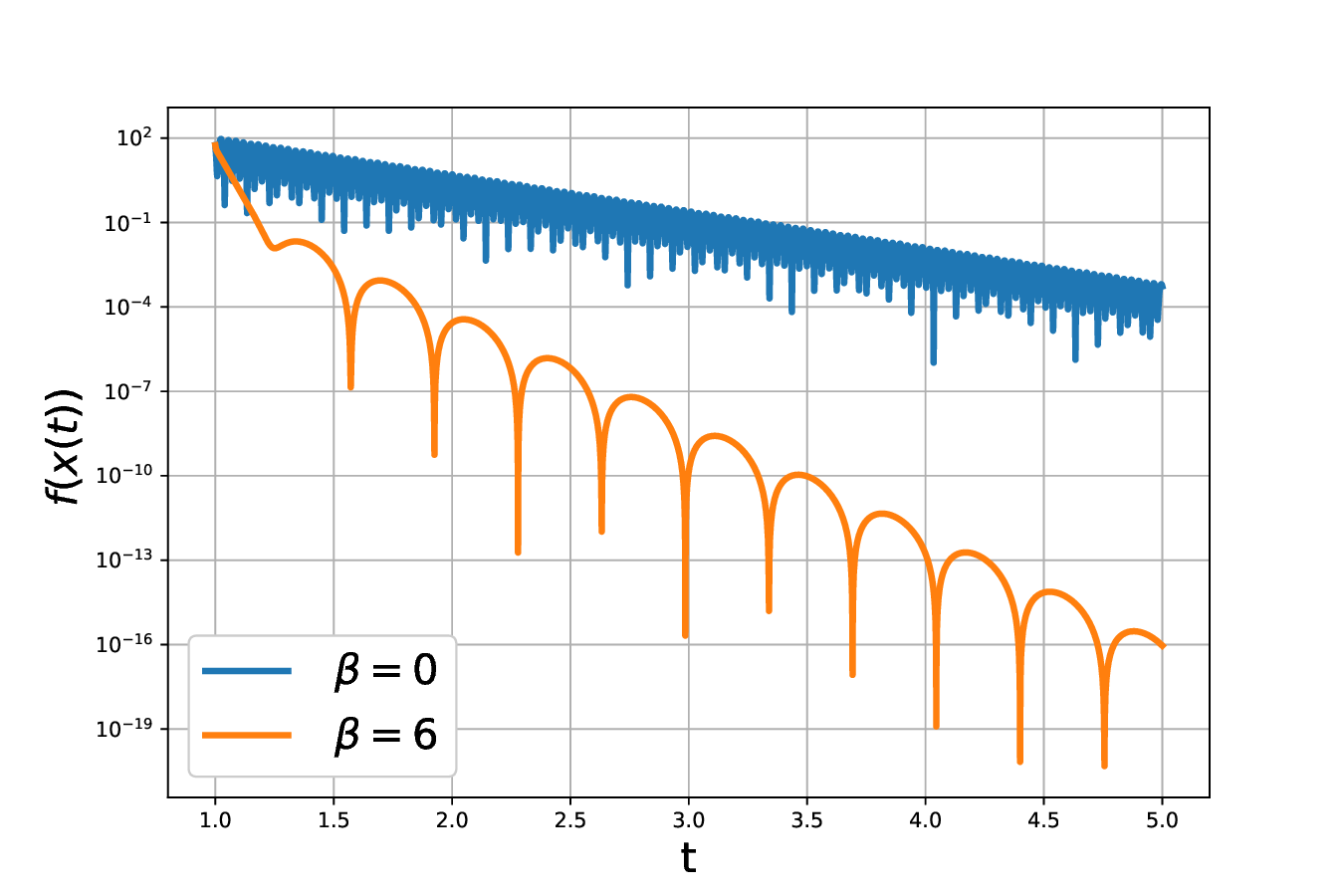}}
			\subfigure[$\rho=100$]{\includegraphics[width=0.48\textwidth]{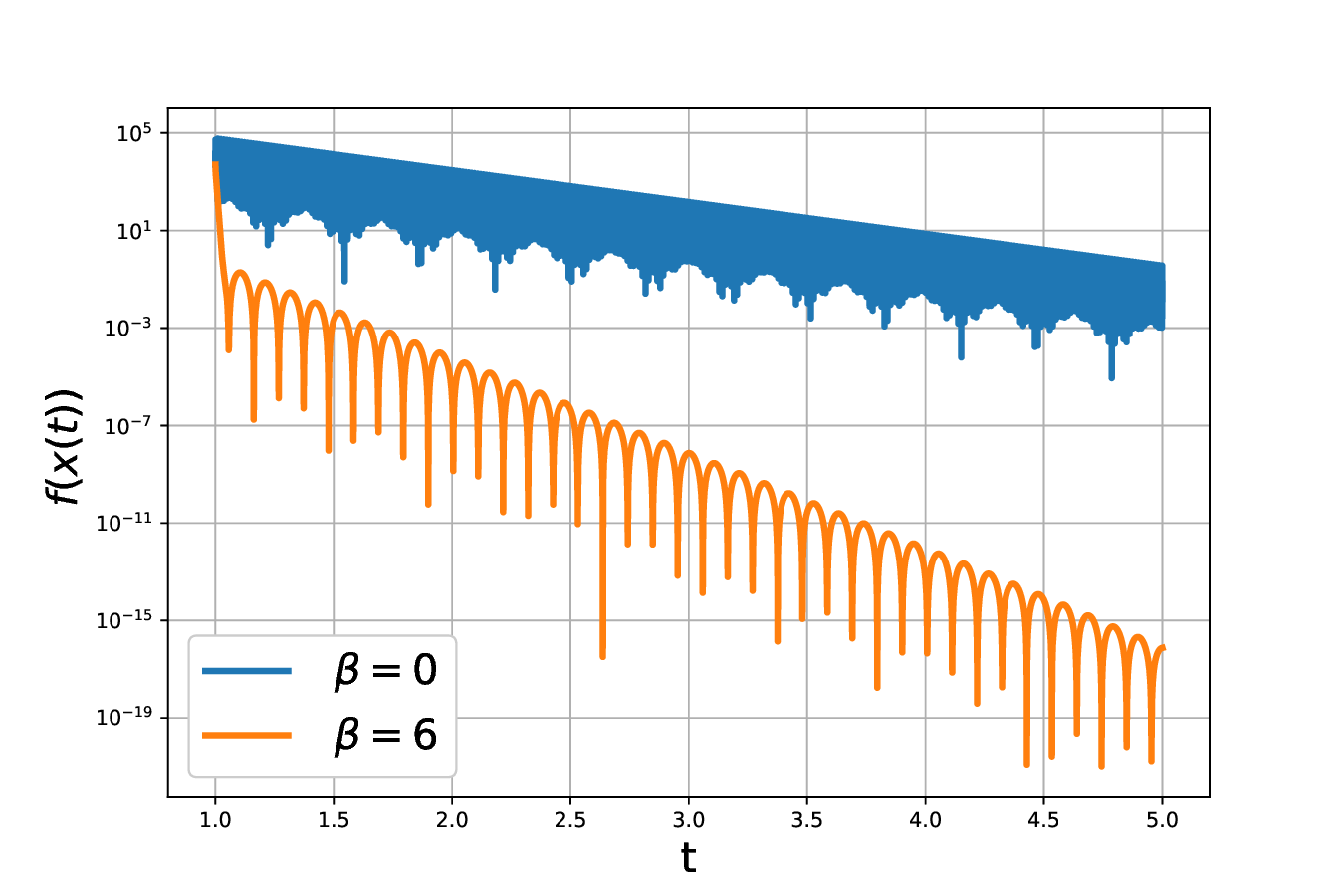}}
			\caption{Depiction of the function values on the interval [1,5] for $\alpha=3$, $\gamma$ as in \eqref{eq:gamma_example} with $i=1$ and $\varepsilon=0.1$, with and without the Hessian term.}
			\label{DINabr}
		\end{figure}
		
		\begin{figure}[htbp]
			\centering
			\subfigure[$\rho=10$]{\includegraphics[width=0.48\textwidth]{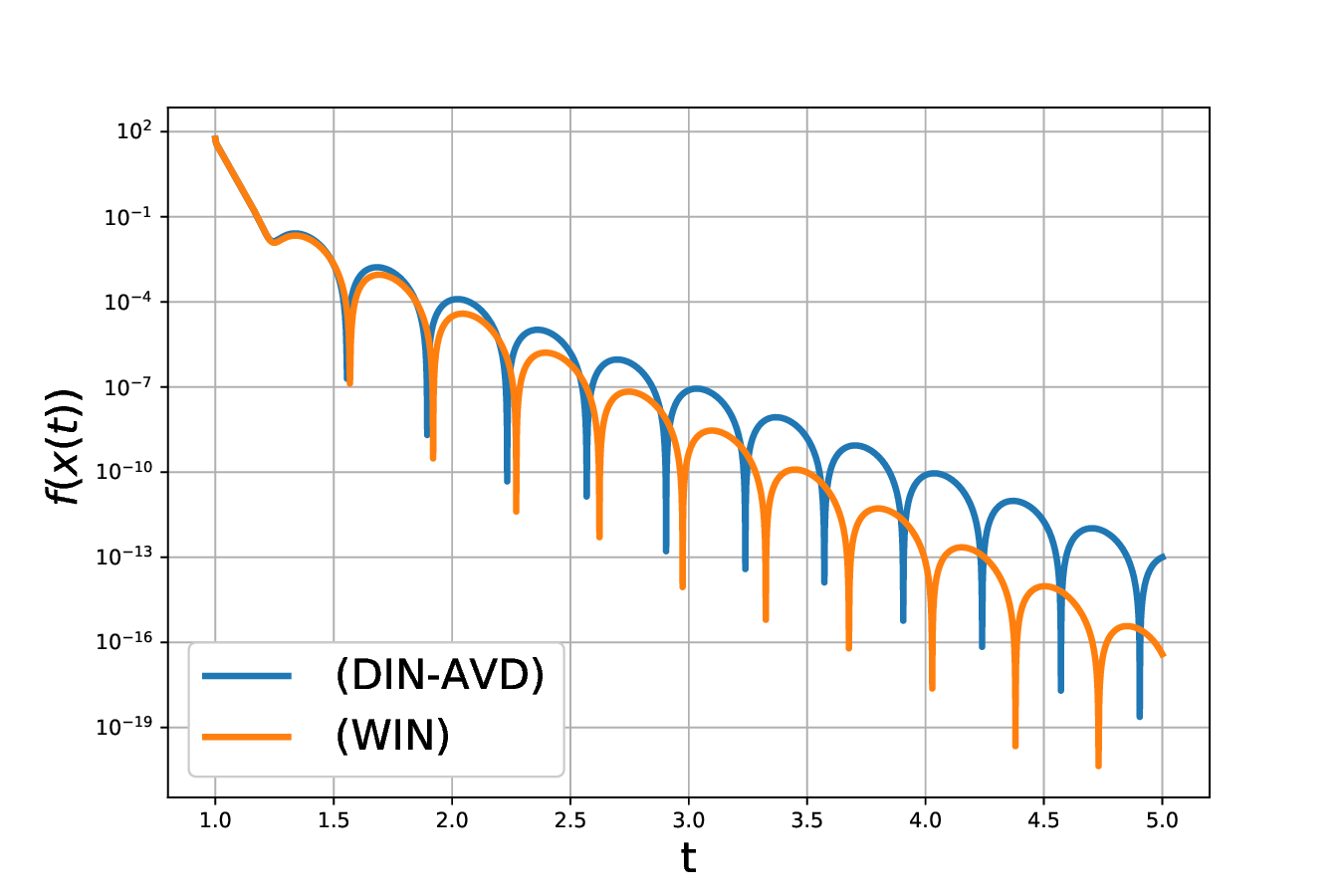}}
			\subfigure[$\rho=100$]{\includegraphics[width=0.48\textwidth]{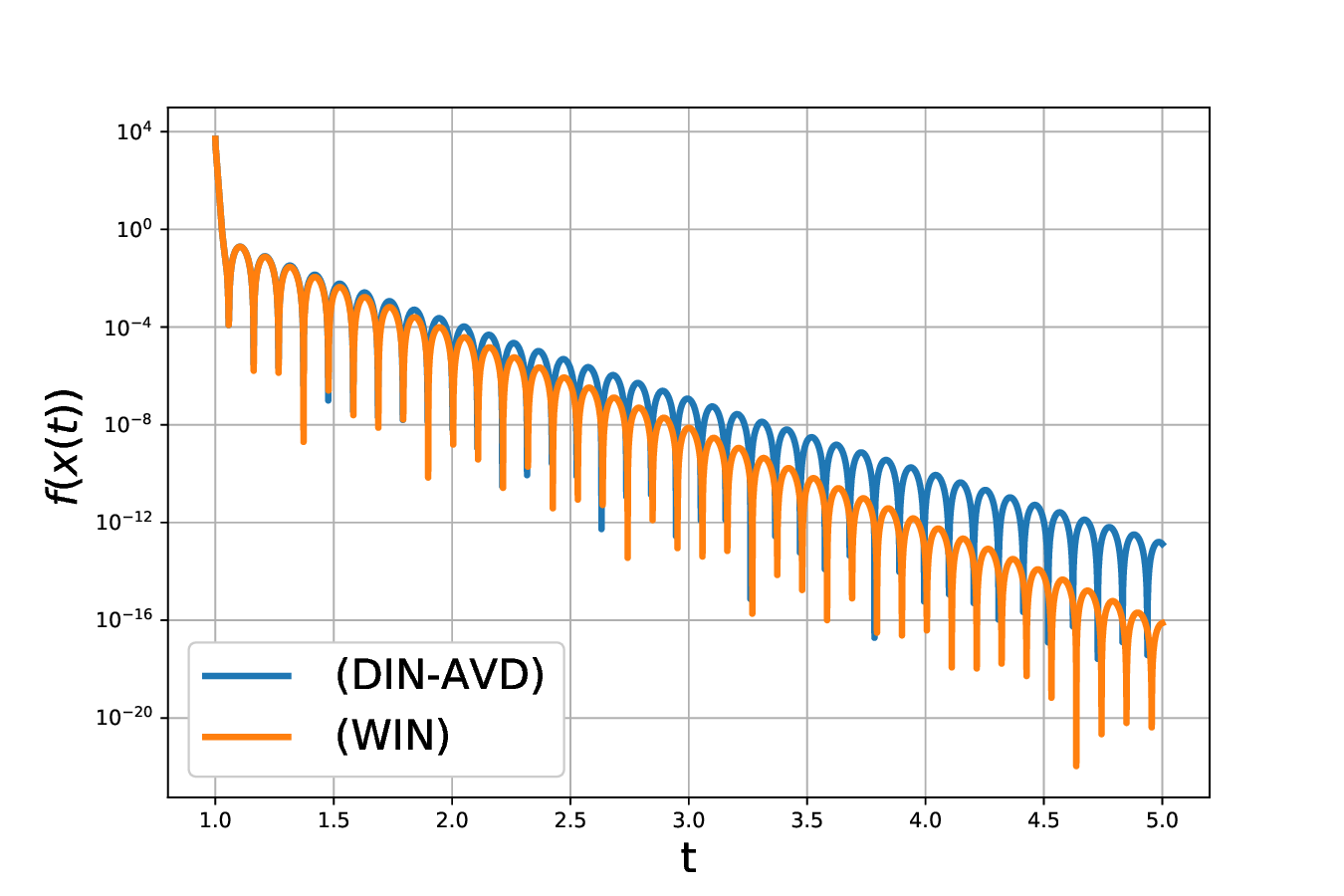}}
			\caption{Comparison of function values of \eqref{ORI_intr} and \eqref{eq:DIN-AVD} for $\alpha=3$, $\beta=6$, $\gamma$ as in  \eqref{eq:gamma_example} with $\varepsilon=1$.}
			\label{fig:comp_DINAVD}
		\end{figure}
		The better performance of the system with constant coefficients and Hessian-driven damping \eqref{ORI_intr}, when applied to the example above, is an evidence of its potential as a continuous-time model for better performing algorithms.
		
		\subsubsection*{Restarting strategies}
		
		Restarting techniques represent an alternative way to speed up inertial methods by reducing the oscillations. The general idea is that, when some criterion is met, the current state of the system (the current iterate of the algorithm) is used as the initial condition to run a new cycle. A classical strategy introduced in \cite{N2013} (see also \cite{NN85}) for the accelerated gradient method is to restart the algorithm at fixed intervals, which depend on the strong convexity parameter of the function, which might be unknown. This difficulty has been addressed, for instance, in \cite{NNG19,AKL23,LX15,roulet2017sharpness,renegar2022simple}. \\
		
		Two heuristic adaptive policies were proposed in \cite{OC13,GB14}. In the first case, the algorithm is restarted if the value of the objective function at the next iterate will be higher than the value at the current one. In the second one, the algorithm is restarted if the vector that indicates the next movement will form an acute angle with the gradient at the current point. In both cases, the objective function values decrease along the iterations, and both correspond to
		$$0<\frac{d}{dt}\left[f\big(x(t)\big)\right]=\left\langle \nabla f\big(x(t)\big),\dot x(t)\right\rangle,$$
		in continuous-time models. Although these schemes show remarkable performance in numerical experiments, the theoretical analysis for the convergence rate has not been established yet. Implementation of restarts has been studied for different classes of algorithms, such as FISTA \cite{FQ19, AKL19, AKL192,ADLA22}, primal-dual splitting algorithms \cite{applegate2023faster}, Schwarz methods \cite{park2021accelerated}, Stochastic gradient descent \cite{wang2022scheduled}, and first order methods for nonconvex optimization \cite{li2023restarted}. \\
		
		In \cite{SBC16}, a {\it speed restart} strategy for \eqref{eq:AVD} is analyzed, and linear convergence of the objective function values is established, in the strongly convex case. The main idea is to restart the dynamics at the point where the speed ceases to increase. This restarting scheme is then implemented on Nesterov's accelerated gradient method as a heuristic. Although this does not beat the ones described above, it does have the theoretical support of the analysis in continuous time. Analogue results for \eqref{eq:DIN-AVD} were obtained in \cite{MP23}. The authors report a 34.67\% increase for the constant $B$ in the aproximation $f\big(x(t)\big)-\min(f)\sim Ae^{-Bt}$ when the restarting scheme is applied to \eqref{eq:DIN-AVD}, with respect to \eqref{eq:AVD}.\footnote{Also, the constant $A$ is $4.6\times 10^{5}$ times smaller. This is not mentioned in \cite{MP23}, but can be easily computed.}

		\subsubsection*{Our contribution}

		In line with the discussion above, the aim of this paper is to analyze the impact of the speed restarting scheme on the dynamical system \eqref{ORI_intr}, in order to establish theoretical foundations to accelerate inertial algorithms by means of a speed restarting policy. By considering general parameters $\alpha>0$, $\beta\ge 0$ and $\gamma>0$, we can encompass those algorithms which do not involve a Hessian-driven damping term, such as the Heavy Ball method \cite{Polyak1987,polyak1964some}, as well as those that do, which include Nesterov's acclerated gradient algorithm \cite{N1983, N2004} and other optimized gradient methods \cite{Drori_2014,Kim_2016,Kim_2017,Park_2023}. We establish the linear convergence of the function values under a Polyak-\L{}ojasiewicz inequality, and show how the speed restart improves the convergence rates of the solutions of \eqref{ORI_intr}. \\

		
		The paper is organized as follows: In Section \ref{sec:srs}, we describe the speed restart scheme, along with the restarted trajectories for \eqref{ORI_intr}, and present our main theoretical result, which established the linear convergence of the function values on the restarted trajectory to the optimal value of the problem. The technical details are collected in Section \ref{sec:lemmas}. The most relevant are, on the one hand, the upper and lower bounds for the restarting times and, on the other, an estimation of the function value decrease between restarts. Finally, although this is not the main purpose of this work, we present some numerical experiments in Section \ref{sec:numerical} to illustrate how the speed restart scheme improves the convergence rate of the trajectories of \eqref{eq:HBF} and \eqref{ORI_intr}, and how it can enhance the performance of the corresponding algorithms.
		
		\section{Speed Restart Scheme}
		\label{sec:srs}
		
		Throughout this paper, let $\mathcal{H}$ be a Hilbert space, and let $f:\mathcal{H}\rightarrow\mathbb{R}$ be a convex function of class $\mathcal{C}^{2}$, which attains its minimum value $f^{*}$. Also, assume that $f$ satisfies the Polyak-\L{}ojasiewicz inequality
		\begin{equation}\label{PL}
			2\mu(f(z)-f^{*})\le \|\nabla f(z)\|^{2}
		\end{equation}
		for all $z\in \mathcal{H}$ and some $\mu>0$, and that $\nabla f$ is Lipschitz-continuous with constant $L>0$. \\
		
		Consider the inertial dynamical system
		\begin{equation}\label{ORI}
			\ddot{x}(t)+\alpha \dot{x}(t)+\beta\nabla^{2}f(x(t))\dot{x}(t)+\gamma \nabla f(x(t))=0,
		\end{equation}
		with parameters $\alpha>0$, $\beta\geq 0$ and $\gamma>0$.
		Given $z\in \mathcal{H}$, let $x_z$ be the solution of \eqref{ORI} with initial conditions $x_z(0)=z$, $\dot{x}_z(0)=0$. The {\it speed restart time} for $x_z$ is 
		\begin{equation} \label{E:restart_time}
			T(z)=\inf \left\{t>0:\frac{d}{dt}\|\dot{x}_z(t)\|^{2}\leq0\right\}.
		\end{equation}
		
		This definition does not directly imply that the restart will ever occur. However, Corollary \ref{infc} and Proposition \ref{upper bound} below explicitly provide positive numbers $\tau_*$ and $\tau^*$ such that
		\begin{equation}\label{eq:supinf}
			\tau_*\le T(z)\le \tau^*\qquad\hbox{for every }z\notin\operatorname{argmin}(f).
		\end{equation}
		
		Now, for $t\in(0,T(z))$, we have
		\begin{eqnarray} 
			\dfrac{d}{dt}f(x_z(t)) &=& \langle \nabla f(x_z(t)),\dot{x}_z(t) \rangle \nonumber \\
			&=&-\dfrac{1}{\gamma}\langle \ddot{\bm{x}}_z(t),\dot{x}_z(t)\rangle-\dfrac{\alpha}{\gamma}\|\dot{x}_z(t)\|^{2}-\dfrac{\beta}{\gamma}\langle \nabla^{2}f(x_z(t))\dot{x}_z(t),\dot{x}_z(t)\rangle \label{increase_0} \\
			&\le & -\frac{\alpha}{\gamma}\|\dot{x}_z(t)\|^{2}, \label{increase}
		\end{eqnarray}
		because $\langle \nabla^{2}f(x_z(t))\dot{x}_z(t),\dot{x}_z(t)\rangle\geq0$ by the convexity of $f$, and $\langle \ddot{x}_z(t),\dot{x}_z(t)\rangle\geq0$  by the definition of $T(z)$. Therefore, the function $t\rightarrow f(x_z(t))$ can only start increasing {\em after} the speed restart time. \\
		
		By appropriately bounding the speed from below, it is possible to quantify the reduction in the function value gap from the initial time to the speed restart time. More precisely, Proposition \ref{fdecrease} gives an explicit constant $Q\in(0,1)$ such that
		\begin{equation} \label{fdecrease}
			f\big(x_z(T(z))\big)\le Q\big(f(z)-f^*\big).
		\end{equation}
		
		Given $z\in \mathcal{H}$, the \textit{restarted trajectory} $\bm{x}_{z}:[0,+\infty)\rightarrow \mathcal{H}$ is defined by glueing together pieces of solutions of \eqref{ORI}, as follows:
		\begin{enumerate}
			\item First, compute $x_{z}$, $T_{1}=T(z)$ and $S_{1}=T_{1}$, and define $\bm{x}_{z}(t)=x_{z}(t)$ for $t\in [0,S_{1}]$.
			\item For $i\geq 1$, having defined $\bm{x}_{z}(t)$ for $t\in [0,S_{i}]$, set $z_{i}=\bm{x}_{z}(S_{i})$, and compute $x_{z_i}$. Then, set $T_{i+1}=T(z_{i})$ and $S_{i+1}=S_{i}+T_{i+1}$, and define $\bm{x}_{z}(t)=x_{z_i}(t-S_{i})$ for $t\in (S_{i},S_{i+1}]$.
		\end{enumerate}
		
		Condition \eqref{eq:supinf} ensures that $S_i$ is well defined for all $i\geq 1$. The resulting trajectory $\bm{x}_z$ is continuous and piecewise continuously differentiable. By \eqref{increase}, we have:
		
		\begin{proposition}\label{trajde}
			The function $t\mapsto f(\bm{x}_{z}(t))$ is nonincreasing.
		\end{proposition}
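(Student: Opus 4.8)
The plan is to verify that $t\mapsto f(\bm{x}_z(t))$ is nonincreasing by handling the interior of each restart interval and the gluing points separately. On each open interval $(S_i,S_{i+1})$, the restarted trajectory coincides (up to a time shift) with the solution $x_{z_i}$ of \eqref{ORI} on $(0,T_{i+1})=(0,T(z_i))$, so the differentiation in \eqref{increase_0}--\eqref{increase} applies verbatim: using the ODE to substitute $\nabla f$, the convexity of $f$ to drop the Hessian term, and the definition of the restart time $T(z_i)$ to conclude $\langle\ddot x_{z_i},\dot x_{z_i}\rangle\ge 0$ on that interval, we obtain
\begin{equation*}
	\frac{d}{dt}f(\bm{x}_z(t)) \le -\frac{\alpha}{\gamma}\|\dot{\bm{x}}_z(t)\|^2 \le 0 \qquad \text{for } t\in(S_i,S_{i+1}).
\end{equation*}
Hence $f\circ\bm{x}_z$ is nonincreasing on each closed subinterval $[S_i,S_{i+1}]$.

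Next I would invoke the continuity of $\bm{x}_z$, noted just before the statement: since $\bm{x}_z$ is continuous on $[0,+\infty)$ and $f$ is continuous (indeed $\mathcal{C}^2$), the composition $f\circ\bm{x}_z$ is continuous. A function that is continuous on $[0,+\infty)$ and nonincreasing on each of the intervals $[S_i,S_{i+1}]$, whose union is $[0,+\infty)$ and which overlap at the breakpoints $S_i$, is nonincreasing on the whole half-line: given $s<t$, pick indices with $s\in[S_{i},S_{i+1}]$ and $t\in[S_{j},S_{j+1}]$ for some $i\le j$, and chain the inequalities $f(\bm{x}_z(s))\ge f(\bm{x}_z(S_{i+1}))\ge\cdots\ge f(\bm{x}_z(S_j))\ge f(\bm{x}_z(t))$, using continuity to make sense of the values at the shared endpoints.

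There is essentially no hard part here: the proposition is a bookkeeping consequence of \eqref{increase} together with the continuity of the glued trajectory. The only point requiring a modicum of care is that \eqref{increase} was derived for $x_z$ on $(0,T(z))$ with the specific initial conditions $x_z(0)=z$, $\dot x_z(0)=0$; one must observe that each piece $x_{z_i}$ is built with exactly those initial conditions (namely $x_{z_i}(0)=z_i=\bm{x}_z(S_i)$, $\dot x_{z_i}(0)=0$), so the same computation and the same sign argument for $\langle\ddot x_{z_i},\dot x_{z_i}\rangle$ on $(0,T_{i+1})$ are legitimate. Once that is in place, continuity closes the argument.
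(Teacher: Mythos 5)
Your argument is correct and is exactly the one the paper intends: the text simply asserts the proposition ``By \eqref{increase}'', i.e.\ the pointwise inequality $\frac{d}{dt}f(x_{z_i}(t))\le-\frac{\alpha}{\gamma}\|\dot x_{z_i}(t)\|^2\le 0$ on each restart interval, with continuity of the glued trajectory handling the breakpoints. Your write-up just makes explicit the bookkeeping (each piece has the required initial conditions, and the chaining across the $S_i$) that the paper leaves implicit.
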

		
		Inequalities \eqref{eq:supinf} and \eqref{fdecrease}, together with Propositoin \ref{trajde}, hold the key to our main theoretical result, which establishes the linear convergence of $f(\bm{x}_{z}(t))$ to $f^{*}$:
		
		\begin{theorem}\label{teo}
			Let $f:\mathcal{H}\rightarrow\mathbb{R}$ be a convex function of class $\mathcal{C}^{2}$, which attains its minimum value $f^{*}$. Assume also that $f$ satisfies the Polyak-\L{}ojasiewicz inequality \eqref{PL} with $\mu>0$, and that $\nabla f$ is Lipschitz continuous with constant $L>0$. Given $\alpha>0$, $\beta\geq 0$ and $\gamma>0$, there exist constants $C,K>0$ such that, for every initial point $z\in \mathcal{H}$, the restarted trajectory $\bm{x}_{z}$ satisfies
			\[f(\bm{x}_{z}(t))-f^{*}\leq C e^{-Kt}(f(z)-f^{*})\leq\frac{CL}{2}e^{-Kt}\operatorname{dist}(z,\operatorname{argmin}(f))^{2},\]
			for all $t>0$.
		\end{theorem}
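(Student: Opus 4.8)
The plan is to turn the three facts already established — the two-sided bound $\tau_*\le T(z)\le\tau^*$ from \eqref{eq:supinf}, the per-cycle contraction $f(x_z(T(z)))-f^*\le Q(f(z)-f^*)$ from \eqref{fdecrease}, and the monotonicity of $t\mapsto f(\bm{x}_z(t))$ from Proposition \ref{trajde} — into a geometric decay of the function-value gap over the restart indices, then convert it into exponential decay in $t$, and finally pass from the gap to the distance to $\operatorname{argmin}(f)$ using Lipschitz continuity of $\nabla f$.

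First I would dispose of the degenerate case $z\in\operatorname{argmin}(f)$: there $\nabla f(z)=0$ and $\dot{x}_z(0)=0$, so uniqueness for \eqref{ORI} gives $x_z\equiv z$ and $f(\bm{x}_z(t))\equiv f^*$, and both claimed inequalities reduce to $0\le 0$. So assume $z\notin\operatorname{argmin}(f)$ and set $z_0=z$, $z_i=\bm{x}_z(S_i)$. As long as $z_i\notin\operatorname{argmin}(f)$, \eqref{eq:supinf} gives $T_{i+1}=T(z_i)\in[\tau_*,\tau^*]$, so the construction of $\bm{x}_z$ proceeds and $S_i=\sum_{j=1}^{i}T_j\le i\tau^*$, while \eqref{fdecrease} gives $f(z_{i+1})-f^*\le Q(f(z_i)-f^*)$; an immediate induction then yields $f(z_i)-f^*\le Q^{\,i}(f(z)-f^*)$. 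Should the trajectory ever reach $\operatorname{argmin}(f)$, say $z_{i_0}\in\operatorname{argmin}(f)$, the gap is zero from time $S_{i_0}$ onward by Proposition \ref{trajde}, so the bound below is trivial there and the estimate applies verbatim for $t<S_{i_0}$.

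Next, to pass to continuous time, fix $t>0$ and let $i$ be the index with $S_i\le t<S_{i+1}$ (with $S_0:=0$). From $t<S_{i+1}\le(i+1)\tau^*$ one gets $i>t/\tau^*-1$, hence $Q^{\,i}\le Q^{\,t/\tau^*-1}$ since $Q\in(0,1)$; combining this with the induction bound and with Proposition \ref{trajde} (which gives $f(\bm{x}_z(t))\le f(\bm{x}_z(S_i))=f(z_i)$) leads to
\[
f(\bm{x}_z(t))-f^*\le f(z_i)-f^*\le Q^{\,i}(f(z)-f^*)\le \tfrac{1}{Q}\,Q^{\,t/\tau^*}(f(z)-f^*)=C\,e^{-Kt}(f(z)-f^*),
\]
with $C=1/Q$ and $K=-\ln(Q)/\tau^*>0$, which is the first inequality. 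For the second, I would invoke the descent lemma for the $L$-Lipschitz gradient: for any $p\in\operatorname{argmin}(f)$ one has $f(z)\le f(p)+\langle\nabla f(p),z-p\rangle+\tfrac{L}{2}\|z-p\|^2=f^*+\tfrac{L}{2}\|z-p\|^2$, and taking the infimum over $p$ gives $f(z)-f^*\le\tfrac{L}{2}\operatorname{dist}(z,\operatorname{argmin}(f))^2$; substituting into the display finishes the argument.

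The main obstacle is not any single calculation but the index bookkeeping in the last step — reading off exponential decay in $t$ from geometric decay in the restart counter using only the uniform upper bound $\tau^*$ — together with cleanly handling the edge cases (the trajectory being stationary, or reaching $\operatorname{argmin}(f)$ in finite time). The genuinely hard part, producing the constants $\tau_*$, $\tau^*$ and $Q$, is carried out in Corollary \ref{infc}, Proposition \ref{upper bound} and Proposition \ref{fdecrease}, which are used here as black boxes; note that the rate $K$ depends only on $\tau^*$ and $Q$, whereas $\tau_*$ enters only to guarantee that the switching times $S_i$ do not accumulate.
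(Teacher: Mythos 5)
Your proposal is correct and follows essentially the same route as the paper's proof: geometric decay of the gap across restart cycles via \eqref{fdecrease}, conversion to exponential decay in $t$ using the uniform upper bound $\tau^*$ on the cycle lengths together with Proposition \ref{trajde}, and the descent lemma for the final inequality, yielding the same constants $C=Q^{-1}$ and $K=-\ln(Q)/\tau^*$. Your explicit treatment of the degenerate cases ($z\in\operatorname{argmin}(f)$ or the trajectory reaching the minimizer set in finite time) and the spelled-out descent-lemma step are minor refinements of details the paper leaves implicit.
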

		
		\begin{proof}
			Let $m$ be the largest positive integer such that $m\tau^{*}\leq t$. By time $t$, the trajectory will have been restarted at least $m$ times. From Proposition \ref{trajde}, we know that
			\[f(\bm{x}_{z}(t))\leq f(\bm{x}_{z}(m\tau^{*}))\leq f(\bm{x}_{z}(m\tau_{*})). \]
			We use \eqref{fdecrease} inductively to obtain
			\[f(\bm{x}_{z}(t))-f^{*}\leq Q^{m}(f(z)-f^{*}).\]
			By definition, $(m+1)\tau^{*}>t$, which entails $m>\frac{t}{\tau^{*}}-1$. Since $Q\in (0,1)$, we have 
			\[Q^{m}\leq Q^{\frac{t}{\tau^{*}}-1}=\frac{1}{Q}\operatorname{exp}\left(\frac{\operatorname{ln}(Q)}{\tau^{*}}t\right).\]
			We obtain the first inequality by setting $C=Q^{-1}>0$ and $K=-\frac{1}{\tau^{*}}\operatorname{ln}(Q)>0$. The second one follows from the fact that $\nabla f$ is Lipschitz-continuous with constant $L$.
		\end{proof}
		
		The values of $C$ and $K$ are given by Corollary \ref{infc} and Propositions \ref{upper bound} and \ref{fdecrease}, whose proofs are technical. In the next section, we provide all the relevant computations and discuss how the obtained results compare to previous works in the literature. \\
		
		From the proof of Theorem \ref{teo}, we see that the tightness of the approximation of the speed restart time given by $\tau_*$ and $\tau^*$ has a profound effect on the quality of the estimation of the convergence rate of $f(\bm{x}_{z}(t))$ to $f^{*}$.
		
		\section{Evolution of the system between restarts} \label{sec:lemmas}
		
		The proof of the main result is technical and will be split into several lemmas. In order to lighten the notation, given an initial condition $z \in \mathcal{H}$, we simply denote by $x$ the solution of \eqref{ORI} with initial conditions $x(0)=z$, and $\dot{x}(0)=0$. 
		
		\subsection{Preliminary estimations}
		We first define some functions that will be useful in the forthcoming analysis. Equation (\ref{ORI}) can be rewritten as
		\begin{equation}\label{Rewrite}
			\frac{d}{dt}(e^{\alpha t}\dot{x}(t))=-\gamma e^{\alpha t}\nabla f(x(t))-\beta e^{\alpha t}\nabla^{2}f(x(t))\dot{x}(t).
		\end{equation}
		Integrating (\ref{Rewrite}) over $\left[0,t\right]$, we get

		\begin{align}\label{Int}
			e^{\alpha t}\dot{x}(t)&=-\gamma \int_{0}^{t}e^{\alpha u}\nabla f(x(u))du-\beta\int_{0}^{t}e^{\alpha u}\nabla^{2}f(x(u))\dot{x}(u) du \nonumber \\
			&=-\gamma\int_{0}^{t}e^{\alpha u}(\nabla f(x(u))-\nabla f(z))du-\beta\int_{0}^{t}e^{\alpha u}\nabla^{2}f(x(u))\dot{x}(u)du \nonumber\\
			&\quad
			-\frac{\gamma (e^{\alpha t}-1)}{\alpha}\nabla f(z).
		\end{align}
		We define the two integrals obtained as
		\begin{equation}\label{IJe}
			I_{z}(t)=\gamma \int_{0}^{t}e^{\alpha u}(\nabla f(x(u))-\nabla f(z))du\quad \text{and}\quad J_{z}(t)=\beta\int_{0}^{t}e^{\alpha u}\nabla^{2}f(x(u))\dot{x}(u) du.
		\end{equation}
		In order to majorize $I_{z}(t)$ and $J_{z}(t)$, we define the function
		\begin{equation}
			M_{z}(t)=\sup\limits_{u\in \left (0,t \right]}\left[\frac{\|\dot{x}(u)\|}{1-e^{-\alpha u}}\right],
		\end{equation}
		which is positive, non-decreasing and continuous on $(0,T(z))$.	
		\begin{lemma}\label{IJ}
			For every $t>0$, we have
			\begin{eqnarray*}
				\|\nabla f(x(t))-\nabla f(z)\| 
				& \leq & L M_{z}(t)\left(t+\frac{1}{\alpha}e^{-\alpha t}-\frac{1}{\alpha}\right), \\
				\|I_{z}(t)\| & \leq & \gamma L M_{z}(t)\left(\frac{1}{\alpha}t e^{\alpha t}-\frac{2}{\alpha^{2}}e^{\alpha t}+\frac{2}{\alpha^{2}}+\frac{t}{\alpha}\right), \\
				\|\nabla^{2}f(x(t))\dot{x}(t)\| 
				& \leq &  LM_{z}(t)\big(1-e^{-\alpha t}\big) \\
				\|J_{z}(t)\|
				& \leq & \beta L M_{z}(t)\left(\frac{1}{\alpha} e^{\alpha t}-\frac{1}{\alpha}-t\right).
			\end{eqnarray*}  
		\end{lemma}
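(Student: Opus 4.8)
The four estimates all follow a common template: bound the left-hand side by an integral of $\|\dot x(u)\|$ (weighted by $e^{\alpha u}$ and, where relevant, by the Hessian), invoke the definition of $M_{z}$ together with its monotonicity to replace $\|\dot x(u)\|$ by $M_{z}(t)(1-e^{-\alpha u})$ on $(0,t]$, and finish with an elementary integration. Before doing this, I would record two facts. First, since $f\in\mathcal{C}^{2}$ and $\nabla f$ is $L$-Lipschitz, the Hessian has operator norm at most $L$ everywhere, because $\nabla^{2}f(y)v=\lim_{h\to 0}h^{-1}(\nabla f(y+hv)-\nabla f(y))$. Second, since $\dot x(0)=0$ and $\ddot x(0)=-\gamma\nabla f(z)$ from \eqref{ORI}, the quotient $\|\dot x(u)\|/(1-e^{-\alpha u})$ tends to $\gamma\|\nabla f(z)\|/\alpha$ as $u\to 0^{+}$; hence $M_{z}$ is finite, and it is non-decreasing by construction, so $\|\dot x(u)\|\le M_{z}(u)(1-e^{-\alpha u})\le M_{z}(t)(1-e^{-\alpha u})$ whenever $0<u\le t$.

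For the first inequality, write $x(t)-z=\int_{0}^{t}\dot x(u)\,du$ and use the Lipschitz bound for $\nabla f$:
\[
\|\nabla f(x(t))-\nabla f(z)\|\le L\|x(t)-z\|\le L\int_{0}^{t}\|\dot x(u)\|\,du\le LM_{z}(t)\int_{0}^{t}(1-e^{-\alpha u})\,du,
\]
and the last integral equals $t+\tfrac1\alpha e^{-\alpha t}-\tfrac1\alpha$. The third inequality is immediate from the Hessian bound: $\|\nabla^{2}f(x(t))\dot x(t)\|\le L\|\dot x(t)\|\le LM_{z}(t)(1-e^{-\alpha t})$.

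For $I_{z}(t)$ and $J_{z}(t)$ I would pass the norm inside the integral,
\[
\|I_{z}(t)\|\le\gamma\int_{0}^{t}e^{\alpha u}\|\nabla f(x(u))-\nabla f(z)\|\,du,\qquad
\|J_{z}(t)\|\le\beta\int_{0}^{t}e^{\alpha u}\|\nabla^{2}f(x(u))\dot x(u)\|\,du,
\]
substitute the pointwise bounds just obtained (with $u$ in the role of $t$, using $M_{z}(u)\le M_{z}(t)$), and integrate. This gives $\|I_{z}(t)\|\le\gamma LM_{z}(t)\int_{0}^{t}e^{\alpha u}(u+\tfrac1\alpha e^{-\alpha u}-\tfrac1\alpha)\,du$ and $\|J_{z}(t)\|\le\beta LM_{z}(t)\int_{0}^{t}(e^{\alpha u}-1)\,du$. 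The $J_{z}$ integral is trivial; for $I_{z}$ the only term needing work is $\int_{0}^{t}ue^{\alpha u}\,du=\tfrac1\alpha te^{\alpha t}-\tfrac1{\alpha^{2}}e^{\alpha t}+\tfrac1{\alpha^{2}}$ (one integration by parts), after which collecting terms produces the stated expression. There is no real obstacle here; the only places demanding a little care are the justification that $M_{z}(t)<\infty$ near $u=0$ (so that the pointwise bound on the speed is legitimate on all of $(0,t]$) and the arithmetic in the $I_{z}$ computation.
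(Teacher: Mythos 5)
Your proposal is correct and follows essentially the same route as the paper: bound each quantity by a weighted integral of $\|\dot x\|$, replace $\|\dot x(u)\|$ by $M_z(t)(1-e^{-\alpha u})$ via the definition and monotonicity of $M_z$, and integrate; your only cosmetic deviation is invoking the operator-norm bound $\|\nabla^2 f\|\le L$ directly for the third estimate, where the paper writes out the same difference-quotient limit along the trajectory. Your added remark on the finiteness of $M_z$ near $u=0$ is a sensible extra precaution that the paper handles implicitly when defining $M_z$.
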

		
		\begin{proof}
			For the first estimation, we use the Lipschitz-continuity of $\nabla f$ and the fact that $M_{z}$ is non-decreasing to obtain
			\[
			\|\nabla f(x(u))-\nabla f(z)\|\leq L \left\|\int_{0}^{u}\dot{x}(s)ds\right\| \leq L M_{z}(u)\int_{0}^{u}(1-e^{-\alpha s})ds=L M_{z}(u)\paren{u+\frac{1}{\alpha}e^{-\alpha u}-\frac{1}{\alpha}}.
			\]
			As a consequence, we get
			\[\|I_{z}(t)\|\leq \gamma L M_{z}(t) \int_{0}^{t}e^{\alpha u}\left(u+\frac{1}{\alpha}e^{-\alpha u}-\frac{1}{\alpha}\right)du=\gamma L M_{z}(t) \paren{\frac{1}{\alpha}t e^{\alpha t}-\frac{2}{\alpha^{2}}e^{\alpha t}+\frac{2}{\alpha^{2}}+\frac{t}{\alpha}}.\]       
			
			For the second inequality, we first estimate	 
			\begin{align*} 	
				\|\nabla^{2}f(x(u))\dot{x}(u)\|&=\left\|\lim_{r\rightarrow u}\frac{\nabla f(x(r))-\nabla f(x(u))}{r-u}\right\|\\
				&\leq \lim_{r\rightarrow u}\frac{L}{r-u}\int_{u}^{r}\|\dot{x}(s)\|ds \\
				&\leq \lim_{r\rightarrow u}\frac{LM_{z}(r)}{r-u}\int_{u}^{r}\big(1-e^{-\alpha s}\big)\,ds \\
				&= LM_{z}(u)(1-e^{-\alpha u}),
			\end{align*}
			and then conclude that
			\[ \|J_{z}(t) \| \leq \beta L \int_{0}^{t} e^{\alpha u}M_{z}(u)(1-e^{-\alpha u})du\leq\beta LM_{z}(t)\paren{\frac{1}{\alpha}e^{\alpha t}-\frac{1}{\alpha}-t},
			\]
			as claimed.
		\end{proof}
		
		The function $M_{z}(t)$, can be majorized in such way that the dependence of the bound on the initial condition $z$ is only given by $\nabla f(z)$. For this purpose, we define the function
		\begin{equation}\label{H}
			H(t)=1+\frac{2L\gamma}{\alpha^{2}}-\frac{L\beta}{\alpha}-\frac{Lt(\frac{\gamma}{\alpha}e^{\alpha t}+\frac{\gamma}{\alpha}-\beta)}{e^{\alpha t}-1}.
		\end{equation}
		
		\begin{proposition}\label{function:H(t)}
			The function $H$ is decreasing on $(0,\infty)$. 
		\end{proposition}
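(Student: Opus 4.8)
The plan is to strip off the constant part of $H$ and reduce to a clean monotonicity statement. Since the first three summands do not depend on $t$, we can write $H(t)=1+\tfrac{2L\gamma}{\alpha^{2}}-\tfrac{L\beta}{\alpha}-L\,g(t)$, where
\[
g(t)=\frac{t\paren{\tfrac{\gamma}{\alpha}e^{\alpha t}+\tfrac{\gamma}{\alpha}-\beta}}{e^{\alpha t}-1}
=\frac{\gamma}{\alpha}\,\phi(t)-\beta\,\psi(t),\qquad
\phi(t):=\frac{t(e^{\alpha t}+1)}{e^{\alpha t}-1},\quad \psi(t):=\frac{t}{e^{\alpha t}-1}.
\]
Both $\phi$ and $\psi$ are smooth on $(0,\infty)$ because $e^{\alpha t}-1>0$ there. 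Since $L>0$, it suffices to show that $g$ is strictly increasing on $(0,\infty)$; and since $\tfrac{\gamma}{\alpha}>0$ and $\beta\ge 0$, it is enough to prove that $\phi$ is strictly increasing and $\psi$ is decreasing, so that $\tfrac{\gamma}{\alpha}\phi$ is strictly increasing and $-\beta\psi$ is non-decreasing.

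For $\phi$, I would differentiate and simplify the numerator of $\phi'(t)$ to $e^{2\alpha t}-1-2\alpha t\,e^{\alpha t}$; dividing by $2e^{\alpha t}>0$, its positivity is equivalent to $\sinh(\alpha t)>\alpha t$, which holds for every $t>0$. For $\psi$, the numerator of $\psi'(t)$ is $e^{\alpha t}-1-\alpha t\,e^{\alpha t}=e^{\alpha t}(1-\alpha t)-1$; setting $s=\alpha t$, the map $s\mapsto e^{s}(1-s)$ equals $1$ at $s=0$ and has derivative $-s\,e^{s}<0$ on $(0,\infty)$, hence $e^{s}(1-s)<1$ for $s>0$, so $\psi'(t)<0$.

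Combining, $g=\tfrac{\gamma}{\alpha}\phi-\beta\psi$ is strictly increasing on $(0,\infty)$, and therefore $H=\text{const}-L\,g$ is (strictly) decreasing there, as claimed. Everything here is elementary; the only mildly fiddly point is the algebraic simplification of $\phi'$, and all that is really needed are the two one-line scalar inequalities $\sinh s>s$ and $e^{s}(1-s)<1$ for $s>0$, each obtained from a trivial monotonicity argument. I do not anticipate any genuine obstacle.
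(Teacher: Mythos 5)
Your proof is correct, and it takes a different (though equally elementary) route from the paper's. The paper differentiates the whole quotient at once, writes $H'(t)=(e^{\alpha t}-1)^{-2}h(t)$ for an explicit six-term numerator $h$, and then needs a second round of sign analysis: it checks $h(0)=0$ and shows $h'(t)<0$ via the single inequality $e^{\alpha t}>1+\alpha t$, concluding $h<0$ and hence $H'<0$. You instead split the $t$-dependent part as $\tfrac{\gamma}{\alpha}\phi-\beta\psi$ with $\phi(t)=t(e^{\alpha t}+1)/(e^{\alpha t}-1)$ and $\psi(t)=t/(e^{\alpha t}-1)$, which decouples the $\gamma$ and $\beta$ contributions and lets the signs $\gamma>0$, $\beta\ge 0$, $L>0$ do the bookkeeping; each piece then reduces to a classical one-line scalar inequality ($\sinh s>s$ for $\phi$, $e^{s}(1-s)<1$ for $\psi$), with no need to differentiate a numerator a second time. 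I checked the two derivative computations ($\phi'$ has numerator $e^{2\alpha t}-1-2\alpha t e^{\alpha t}$, $\psi'$ has numerator $e^{\alpha t}(1-\alpha t)-1$) and the assembly step (strictly increasing plus non-decreasing is strictly increasing); everything is sound. Your version is arguably cleaner and more modular — $\psi$ is the reciprocal of the standard function $s\mapsto (e^{s}-1)/s$, whose monotonicity is well known — while the paper's single-numerator computation is more direct but requires the extra layer $h(0)=0$, $h'<0$.
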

		
		\begin{proof}
			The derivative of $H$ is given by $H'(t)=(e^{\alpha t}-1)^{-2}h(t)$, 
			where
			\[h(t)=-\frac{L\gamma}{\alpha}e^{2\alpha t}+\frac{L\gamma}{\alpha}+L\beta e^{\alpha t}-L\beta+2L\gamma t e^{\alpha t} - L\alpha \beta t e^{\alpha t}.\]
			As a consequence, $h(0)=0$ and
			\[h'(t)=2L\gamma e^{\alpha t}(1+ \alpha t - e^{\alpha t})-L\alpha^{2}\beta t e^{\alpha t}<0,\]
			as $e^{\alpha t} > 1 + \alpha t$ for all $t>0$. Hence $h$ is decreasing function. Since $h(0)=0$, $h(t)<0$ for $t>0$ and  $H$ is  decreasing. 
		\end{proof}
		
		It is easy to check that 
		$\lim_{t \to 0} H(t) = 1$ and $\lim_{t \to +\infty} H(t) = -\infty$. We denote by $\tau_1$ and $\tau_2$ the unique positive numbers such that $H(\tau_1)=0$ and $H(\tau_2)=\frac{1}{2}$, respectively.
		
		\begin{lemma}\label{Bound_Mz}
			For every $t\in (0,\tau_{1})$, we have
			$$M_{z}(t)\leq \frac{\gamma\|\nabla f(z)\|}{\alpha H(t)}.$$
		\end{lemma}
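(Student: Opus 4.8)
The plan is to control $\|\dot x(t)\|$ directly from the integrated equation \eqref{Int} and the estimates in Lemma \ref{IJ}, turning these into a self-referential inequality for $M_z$. First I would take norms in \eqref{Int}, which gives
\[
e^{\alpha t}\|\dot x(t)\| \le \|I_z(t)\| + \|J_z(t)\| + \frac{\gamma(e^{\alpha t}-1)}{\alpha}\|\nabla f(z)\|.
\]
Substituting the bounds for $\|I_z(t)\|$ and $\|J_z(t)\|$ from Lemma \ref{IJ}, and dividing by $e^{\alpha t}$, yields a bound for $\|\dot x(t)\|$ of the form $A(t)\,M_z(t) + B(t)\,\|\nabla f(z)\|$, where $A(t)$ collects the $\frac1\alpha t - \frac{2}{\alpha^2} + \frac{2}{\alpha^2}e^{-\alpha t} + \frac{t}{\alpha}e^{-\alpha t}$ type terms (times $\gamma L$) plus the $\beta L(\frac1\alpha - \frac1\alpha e^{-\alpha t} - t e^{-\alpha t})$ term, and $B(t) = \frac{\gamma}{\alpha}(1-e^{-\alpha t})$.

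Next I would divide through by $1-e^{-\alpha u}$ to bring in the normalization defining $M_z$. Since the bounds in Lemma \ref{IJ} hold for every $u$, and $M_z$ is nondecreasing, I take the supremum over $u\in(0,t]$: the left side becomes exactly $M_z(t)$, while on the right, the coefficient $A(u)/(1-e^{-\alpha u})$ must be shown to be nondecreasing (or at least bounded by its value at $t$) so that the sup is attained at $u=t$, giving
\[
M_z(t) \le \frac{A(t)}{1-e^{-\alpha t}}\,M_z(t) + \frac{\gamma}{\alpha}\|\nabla f(z)\|.
\]
The key algebraic observation is that $1 - \dfrac{A(t)}{1-e^{-\alpha t}}$ is precisely $H(t)$ as defined in \eqref{H}: expanding $\dfrac{A(t)}{1-e^{-\alpha t}} = \dfrac{L}{e^{\alpha t}-1}\big[\gamma t e^{\alpha t} - \tfrac{2\gamma}{\alpha}e^{\alpha t} + \tfrac{2\gamma}{\alpha} + \gamma t + \tfrac{\beta}{\alpha}(\cdots)\big]$ after multiplying numerator and denominator by $e^{\alpha t}$, and collecting terms, one recovers $-1 - \frac{2L\gamma}{\alpha^2} + \frac{L\beta}{\alpha} + \frac{Lt(\frac{\gamma}{\alpha}e^{\alpha t} + \frac{\gamma}{\alpha} - \beta)}{e^{\alpha t}-1}$, i.e. $1 - H(t)$. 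So the inequality reads $H(t)\,M_z(t) \le \frac{\gamma}{\alpha}\|\nabla f(z)\|$, and for $t\in(0,\tau_1)$ we have $H(t)>0$ by Proposition \ref{function:H(t)} and the definition of $\tau_1$, so we may divide to conclude $M_z(t) \le \frac{\gamma\|\nabla f(z)\|}{\alpha H(t)}$.

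The main obstacle I anticipate is the bookkeeping needed to justify pulling the supremum through: one must verify that each of the per-$u$ estimates, after dividing by $1-e^{-\alpha u}$, has a coefficient of $M_z(u)$ that is monotone in $u$ so that replacing $M_z(u)$ by $M_z(t)$ and the coefficient by its value at $t$ is legitimate — and then that the resulting constant function coefficient, the messy combination above, simplifies exactly to $1-H(t)$. A secondary subtlety is that $M_z$ is only known to be continuous on $(0,T(z))$, and $\tau_1$ is defined via $H$ alone; implicitly one is working on $(0,\min\{T(z),\tau_1\})$, but since the argument will ultimately feed into showing $T(z)\ge\tau_*$ with $\tau_*$ related to $\tau_1$ (via Corollary \ref{infc}), the statement as phrased for $t\in(0,\tau_1)$ is the right one to prove, interpreting $M_z$ as $+\infty$ where $\|\dot x\|$ is not yet defined — or more cleanly, establishing the bound on $(0,\min\{T(z),\tau_1\})$ and noting that is all that is used downstream. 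The rest is routine integration of exponentials.
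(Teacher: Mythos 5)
Your proposal is correct and follows essentially the same route as the paper: take norms in the integrated identity \eqref{Int}, insert the bounds of Lemma \ref{IJ}, divide by $1-e^{-\alpha u}$, pass to the supremum over $u\in(0,t]$ using that the right-hand side is nondecreasing (a product of nonnegative nondecreasing factors, since $M_z$ is nondecreasing and $1-H$ is nonnegative and increasing by Proposition \ref{function:H(t)}), and recognize the resulting coefficient of $M_z(t)$ as $1-H(t)$ before dividing by $H(t)>0$ on $(0,\tau_1)$. The algebraic identification you flag as the key step checks out exactly, and your remark about the domain $(0,\min\{T(z),\tau_1\})$ is a reasonable clarification the paper leaves implicit.
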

		
		\begin{proof}
			If $0<u\leq t$, using (\ref{Int}), (\ref{IJe}) and Lemma \ref{IJ}, we obtain
			\begin{align*}
				\frac{\|\dot{x}(u)\|}{1-e^{-\alpha u}}
				&\leq \frac{\|I_{z}(u)+J_{z}(u)\|}{e^{\alpha u}-1}+\frac{\gamma}{\alpha}\|\nabla f(z)\|	\\
				&\leq L M_{z}(u)\left[\frac{\gamma  \left(\frac{1}{\alpha}u e^{\alpha u}-\frac{2}{\alpha^{2}}e^{\alpha u}+\frac{2}{\alpha^{2}}+\frac{u}{\alpha}\right)+\beta  \left(\frac{1}{\alpha} e^{\alpha u}-\frac{1}{\alpha}-u\right) }{e^{\alpha u}-1}\right] +\frac{\gamma}{\alpha}\|\nabla f(z)\|\\
				&\leq L M_{z}(u)\left[\frac{u \left(\frac{\gamma}{\alpha} e^{\alpha u}+\frac{\gamma}{\alpha}-\beta\right)+ (e^{\alpha u}-1) \left(\frac{\beta}{\alpha} -\frac{2\gamma}{\alpha^{2}}\right) }{e^{\alpha u}-1}\right]+\frac{\gamma}{\alpha}\|\nabla f(z)\|. 
			\end{align*}   
			The right-hand side is non-decreasing in $u$. By taking supremum over $u\in (0,t]$, we get
			\[ M_{z}(t)\leq L M_{z}(t)\left[\frac{t \left(\frac{\gamma}{\alpha} e^{\alpha t}+\frac{\gamma}{\alpha}-\beta\right)+ (e^{\alpha t}-1) \left(\frac{\beta}{\alpha} -\frac{2\gamma}{\alpha^{2}}\right) }{e^{\alpha t}-1}\right] +\frac{\gamma}{\alpha}\|\nabla f(z)\|.\]
			The result is obtained by arranging the terms, and using (\ref{H}).
		\end{proof}
		
		Lemmas \ref{IJ} and \ref{Bound_Mz} together yield:
		
		\begin{corollary} \label{estimate}
			For every $t\in (0,\tau_{1})$, we have
			\begin{eqnarray*}
				\|I_{z}(t)+J_{z}(t)\|
				& \leq & (e^{\alpha t}-1)\left[ \frac{1-H(t)}{H(t)}\right] \frac{\gamma}{\alpha} \|\nabla f(z)\|, \\
				\|\gamma(\nabla f(x(t))-\nabla f(z))+\beta \nabla^{2}f(x(t))\dot{x}(t) \| 
				& \leq & \left[\gamma t +\left(\frac{\gamma}{\alpha}-\beta\right)(e^{-\alpha t}-1)\right]\frac{L\gamma \|\nabla f(z)\|}{\alpha H(t)}.
			\end{eqnarray*}   
		\end{corollary}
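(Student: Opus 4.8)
The plan is to obtain both estimates of Corollary~\ref{estimate} as direct consequences of the bounds already established in Lemma~\ref{IJ} combined with the majorization of $M_z(t)$ given by Lemma~\ref{Bound_Mz}, by bookkeeping the exponential and polynomial terms carefully.

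For the first inequality, I would start from the triangle inequality $\|I_z(t)+J_z(t)\|\le\|I_z(t)\|+\|J_z(t)\|$ and insert the two bounds from Lemma~\ref{IJ}:
\[
\|I_z(t)\|+\|J_z(t)\|\le L M_z(t)\left[\gamma\left(\tfrac{1}{\alpha}te^{\alpha t}-\tfrac{2}{\alpha^2}e^{\alpha t}+\tfrac{2}{\alpha^2}+\tfrac{t}{\alpha}\right)+\beta\left(\tfrac{1}{\alpha}e^{\alpha t}-\tfrac{1}{\alpha}-t\right)\right].
\]
The bracketed quantity is exactly $(e^{\alpha t}-1)$ times the expression appearing inside the brackets in the last displayed inequality of the proof of Lemma~\ref{Bound_Mz}; regrouping shows it equals $(e^{\alpha t}-1)\,\dfrac{1-H(t)}{H(t)}\cdot\dfrac{1}{L}\cdot\dfrac{\alpha}{\gamma}\,$ after one uses the definition~\eqref{H} of $H$. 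Then substituting $M_z(t)\le\dfrac{\gamma\|\nabla f(z)\|}{\alpha H(t)}$ from Lemma~\ref{Bound_Mz} (valid since $t<\tau_1$) yields the claimed bound. Concretely: $L M_z(t)\cdot(\text{bracket})\le L\cdot\dfrac{\gamma\|\nabla f(z)\|}{\alpha H(t)}\cdot(e^{\alpha t}-1)\cdot\dfrac{1}{L}\cdot\dfrac{\alpha}{\gamma}\cdot\dfrac{1-H(t)}{H(t)}\cdot\dfrac{\gamma}{\alpha}$, and the factors of $L$, $\alpha$, $\gamma$ collapse to give $(e^{\alpha t}-1)\dfrac{1-H(t)}{H(t)}\dfrac{\gamma}{\alpha}\|\nabla f(z)\|$.

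For the second inequality, I would again use the triangle inequality together with the first and third estimates of Lemma~\ref{IJ}:
\[
\|\gamma(\nabla f(x(t))-\nabla f(z))+\beta\nabla^2f(x(t))\dot x(t)\|\le\gamma L M_z(t)\left(t+\tfrac{1}{\alpha}e^{-\alpha t}-\tfrac{1}{\alpha}\right)+\beta L M_z(t)(1-e^{-\alpha t}).
\]
Factoring out $L M_z(t)$ and collecting terms in $t$ and in $(e^{-\alpha t}-1)$ gives $L M_z(t)\left[\gamma t+\left(\tfrac{\gamma}{\alpha}-\beta\right)(e^{-\alpha t}-1)\right]$, and then substituting the bound on $M_z(t)$ from Lemma~\ref{Bound_Mz} produces the stated right-hand side $\left[\gamma t+\left(\tfrac{\gamma}{\alpha}-\beta\right)(e^{-\alpha t}-1)\right]\dfrac{L\gamma\|\nabla f(z)\|}{\alpha H(t)}$.

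The only mildly delicate point—and the step I expect to require the most care—is the algebraic identification in the first inequality showing that the polynomial-exponential combination $\gamma\left(\tfrac{1}{\alpha}te^{\alpha t}-\tfrac{2}{\alpha^2}e^{\alpha t}+\tfrac{2}{\alpha^2}+\tfrac{t}{\alpha}\right)+\beta\left(\tfrac{1}{\alpha}e^{\alpha t}-\tfrac{1}{\alpha}-t\right)$ is precisely $(e^{\alpha t}-1)$ times $\dfrac{1-H(t)}{L}\cdot\dfrac{\alpha}{\gamma}$; this is just unwinding definition~\eqref{H}, but one must track the $\tfrac{2L\gamma}{\alpha^2}-\tfrac{L\beta}{\alpha}$ constant term and the $t(\tfrac{\gamma}{\alpha}e^{\alpha t}+\tfrac{\gamma}{\alpha}-\beta)/(e^{\alpha t}-1)$ term without sign errors. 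Everything else is routine substitution, and both bounds hold on $(0,\tau_1)$ because that is the range on which Lemma~\ref{Bound_Mz} applies and on which $H(t)>0$, so the denominators are legitimate.
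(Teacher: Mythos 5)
Your strategy is exactly what the paper intends: Corollary \ref{estimate} is presented as an immediate consequence of Lemmas \ref{IJ} and \ref{Bound_Mz}, and your treatment of the second inequality is correct as written. The first inequality, however, contains an algebraic slip in precisely the step you flag as delicate. Unwinding definition \eqref{H} gives
\[
\gamma\Bigl(\tfrac{1}{\alpha}te^{\alpha t}-\tfrac{2}{\alpha^2}e^{\alpha t}+\tfrac{2}{\alpha^2}+\tfrac{t}{\alpha}\Bigr)+\beta\Bigl(\tfrac{1}{\alpha}e^{\alpha t}-\tfrac{1}{\alpha}-t\Bigr)
= t\Bigl(\tfrac{\gamma}{\alpha}e^{\alpha t}+\tfrac{\gamma}{\alpha}-\beta\Bigr)+(e^{\alpha t}-1)\Bigl(\tfrac{\beta}{\alpha}-\tfrac{2\gamma}{\alpha^2}\Bigr)
= \frac{(e^{\alpha t}-1)\bigl(1-H(t)\bigr)}{L},
\]
with \emph{no} factor of $1/H(t)$ and no factor of $\alpha/\gamma$; the single $1/H(t)$ in the final bound comes solely from the substitution $M_z(t)\le \gamma\|\nabla f(z)\|/(\alpha H(t))$. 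If one multiplies out the chain of factors in your ``concretely'' line literally, the result is $(e^{\alpha t}-1)\,\frac{1-H(t)}{H(t)^{2}}\,\frac{\gamma}{\alpha}\|\nabla f(z)\|$, which, since $0<H(t)<1$ on $(0,\tau_1)$, is strictly weaker than the stated bound, so the first estimate is not actually established as written. This is a bookkeeping error rather than a conceptual gap: replacing your identification of the bracket by the identity displayed above and then applying Lemma \ref{Bound_Mz} yields the corollary exactly.
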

		
		\subsection{Estimations for the speed restarting time}
		
		We now establish upper and lower bounds for the restarting time, depending on $\alpha$, $\beta$, $\gamma$ and $L$, and {\it not} on the initial condition $z$. We begin by bounding the inner product involved in the definition of the speed restarting time.
		
		\begin{lemma} \label{L:bound_acceleration}
			Let $z\notin \operatorname{argmin}(f)$ and let $x$ be the solution of (\ref{ORI}) with initial conditions $x(0)=z$ and $\dot{x}(0)=0$. For every $t\in (0, \tau_{1})$, we have	
			\[ \langle \dot{x}(t),\ddot{x}(t)\rangle\geq \frac{\gamma^{2}(e^{\alpha t}-1)\|\nabla f(z)\|^{2}}{\alpha e^{2\alpha t} H(t)^{2}}G(t),\]
			where
			\begin{equation}\label{eq:def_G}
				G(t)=1-\frac{Lte^{\alpha t}(\frac{\gamma}{\alpha}e^{\alpha t}+\frac{\gamma}{\alpha}-\beta)}{e^{\alpha t}-1}+\frac{3L\gamma}{\alpha^{2}}e^{\alpha t}-\frac{2L\beta}{\alpha}e^{\alpha t}-\frac{L\gamma}{\alpha^{2}}+\frac{L\beta}{\alpha}-\frac{L\gamma}{\alpha}te^{\alpha t}.
			\end{equation}
		\end{lemma}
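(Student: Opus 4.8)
The plan is to evaluate $\langle\dot{x}(t),\ddot{x}(t)\rangle$ by combining the two available descriptions of the trajectory and then controlling the non-dominant terms. From the differential equation \eqref{ORI} one has $\ddot{x}(t)=-\alpha\dot{x}(t)-\gamma\nabla f(z)-R(t)$, where $R(t):=\gamma(\nabla f(x(t))-\nabla f(z))+\beta\nabla^{2}f(x(t))\dot{x}(t)$ is exactly the vector estimated in Corollary~\ref{estimate}. From the integrated form \eqref{Int} one has $\dot{x}(t)=-e^{-\alpha t}(I_{z}(t)+J_{z}(t))-\tfrac{\gamma}{\alpha}(1-e^{-\alpha t})\nabla f(z)$, equivalently $I_{z}(t)+J_{z}(t)=-\bigl(e^{\alpha t}\dot{x}(t)+\tfrac{\gamma}{\alpha}(e^{\alpha t}-1)\nabla f(z)\bigr)$. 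Inserting the first identity into the inner product gives
\[
\langle\dot{x}(t),\ddot{x}(t)\rangle=-\alpha\|\dot{x}(t)\|^{2}-\gamma\langle\dot{x}(t),\nabla f(z)\rangle-\langle\dot{x}(t),R(t)\rangle ,
\]
and I would then substitute the second identity for $\dot x$ into the first two terms.

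First I would isolate the leading contribution: the part of $-\alpha\|\dot x\|^{2}-\gamma\langle\dot x,\nabla f(z)\rangle$ that does \emph{not} involve $I_{z}+J_{z}$ collapses, after a one-line computation using $\tfrac{\gamma}{\alpha}(1-e^{-\alpha t})$ explicitly, to $\tfrac{\gamma^{2}(e^{\alpha t}-1)}{\alpha e^{2\alpha t}}\|\nabla f(z)\|^{2}$; note this is already the claimed bound with $H(t)^{2}$ playing the role of $G(t)$, so that $G(t)$ should emerge as $H(t)^{2}$ minus the (negative of the) error contributions. Everything else is an error term of two kinds: terms linear or quadratic in $I_{z}(t)+J_{z}(t)$, controlled by $\|I_{z}(t)+J_{z}(t)\|\le(e^{\alpha t}-1)\tfrac{1-H(t)}{H(t)}\tfrac{\gamma}{\alpha}\|\nabla f(z)\|$ from Corollary~\ref{estimate} (and, for the cross term $\langle I_{z}+J_{z},\nabla f(z)\rangle$, Cauchy--Schwarz against the same bound); and the term $-\langle\dot x(t),R(t)\rangle$, controlled by Cauchy--Schwarz using $\|\dot{x}(t)\|=(1-e^{-\alpha t})\tfrac{\|\dot x(t)\|}{1-e^{-\alpha t}}\le(1-e^{-\alpha t})M_{z}(t)\le(1-e^{-\alpha t})\tfrac{\gamma\|\nabla f(z)\|}{\alpha H(t)}$ from Lemma~\ref{Bound_Mz} together with the bound on $\|R(t)\|$ from Corollary~\ref{estimate}. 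Each error term then takes the form $\tfrac{\|\nabla f(z)\|^{2}}{H(t)^{2}}$ (or $\tfrac{\|\nabla f(z)\|^{2}}{H(t)}$) times an explicit elementary function of $t,\alpha,\beta,\gamma,L$.

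Finally I would add the pieces, factor out $\tfrac{\gamma^{2}(e^{\alpha t}-1)}{\alpha e^{2\alpha t}H(t)^{2}}\|\nabla f(z)\|^{2}$, and verify that the remaining scalar factor is at least $G(t)$. This last algebraic step is the main obstacle: it reduces to an inequality among elementary functions of $e^{\alpha t}$, $t$ and $H(t)$. The efficient way to handle it is to eliminate the explicit occurrences of $1-H(t)$ using the defining relation of $H$ from the proof of Lemma~\ref{Bound_Mz}, namely $(e^{\alpha t}-1)(1-H(t))=L\bigl[t(\tfrac{\gamma}{\alpha}e^{\alpha t}+\tfrac{\gamma}{\alpha}-\beta)+(e^{\alpha t}-1)(\tfrac{\beta}{\alpha}-\tfrac{2\gamma}{\alpha^{2}})\bigr]$; after substitution the power $H(t)^{2}$ in the denominator absorbs all but one factor of the $H(t)$-dependence, the quadratic-in-$H$ contributions of the leading term and of the $\langle I_z+J_z,\nabla f(z)\rangle$ term combine, and collecting the resulting polynomial-in-$e^{\alpha t}$ terms should leave precisely $G(t)$ as in \eqref{eq:def_G}. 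The point to be careful about is that the coefficient multiplying $\langle I_{z}+J_{z},\nabla f(z)\rangle$ changes sign at $e^{\alpha t}=2$, so the Cauchy--Schwarz steps must be set up with absolute values and the final comparison with $G(t)$ checked on all of $(0,\tau_{1})$ (equivalently, one may split the argument at $e^{\alpha t}=2$).
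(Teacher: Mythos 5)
Your plan follows essentially the same route as the paper's proof: your decomposition $\ddot{x}(t)=-\alpha\dot{x}(t)-\gamma\nabla f(z)-R(t)$ is literally the paper's $\ddot{x}=A(t)-B(t)$ (substituting \eqref{dotx} into $-\alpha\dot x-\gamma\nabla f(z)$ gives exactly $A(t)=\frac{\alpha}{e^{\alpha t}}(I_z+J_z)-\frac{\gamma}{e^{\alpha t}}\nabla f(z)$), and the estimates you invoke — the bound on $\|I_z+J_z\|$ and on $\|R\|=\|B\|$ from Corollary \ref{estimate}, the bound $\|\dot x(t)\|\le\frac{\gamma(e^{\alpha t}-1)}{\alpha e^{\alpha t}H(t)}\|\nabla f(z)\|$, and Cauchy--Schwarz on the cross terms — are exactly the ones the paper uses, with the same bookkeeping leading to $e^{\alpha t}H(t)-(e^{\alpha t}-1)$ minus the $\|\dot x\|\|B\|$ contribution, which is $G(t)$.

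The one point where you go beyond the paper is your closing caveat about the coefficient of $\langle I_z+J_z,\nabla f(z)\rangle$, which equals $\frac{\gamma(2-e^{\alpha t})}{e^{2\alpha t}}$ and changes sign at $e^{\alpha t}=2$. This is a genuine observation: the paper's displayed chain bounds this term from below by $-\frac{\gamma^{2}(e^{\alpha t}-2)(e^{\alpha t}-1)}{\alpha e^{2\alpha t}}\bigl[\frac{1-H(t)}{H(t)}\bigr]\|\nabla f(z)\|^{2}$, which is the correct Cauchy--Schwarz estimate only when $e^{\alpha t}\ge 2$; for $t<\frac{\ln 2}{\alpha}$ it amounts to using the reverse inequality. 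If you carry out your ``absolute value'' version, on that initial segment the scalar factor comes out as $G(t)-2(2-e^{\alpha t})H(t)(1-H(t))$, which is strictly smaller than $G(t)$ there (since $0<H(t)<1$ on $(0,\tau_1)$), so the terms do \emph{not} ``collect to precisely $G(t)$'' uniformly on $(0,\tau_1)$ as your last step hopes. You have therefore not closed the argument for small $t$ — but neither has the paper; the honest conclusion of your approach is the lemma with $G$ replaced by $G(t)-2(2-e^{\alpha t})^{+}H(t)(1-H(t))$, which still yields a positive lower bound for the restart time (a possibly slightly smaller $\tau_3$) and leaves the downstream results qualitatively intact. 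You should either restrict the signed bound to $e^{\alpha t}\ge2$ and treat the initial segment separately, or state the corrected $G$.
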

		\begin{proof}
			From \eqref{Int} and \eqref{IJe}, we obtain
			\begin{equation}\label{dotx}
				\dot{x}(t)=-\frac{1}{e^{\alpha t}}(I_{z}(t)+J_{z}(t))-\frac{\gamma(e^{\alpha t}-1)\nabla f(z)}{\alpha e^{\alpha t}}.
			\end{equation}
			Also,
			\[\frac{d}{dt}\left(\frac{1}{e^{\alpha t}}(I_{z}(t)+J_{z}(t))\right)=-\frac{\alpha}{e^{\alpha t}}(I_{z}(t)+J_{z}(t))+\gamma(\nabla f(x(t))-\nabla f(z))+\beta\nabla^{2}f(x(t))\dot{x}(t).\]
			Then,
			\[\ddot{x}(t)= \frac{\alpha}{e^{\alpha t}}(I_{z}(t)+J_{z}(t))-\gamma(\nabla f(x(t))-\nabla f(z))-\beta\nabla^{2}f(x(t))\dot{x}(t)-\frac{\gamma\nabla f(z)}{e^{\alpha t}}.\]
			Let us define
			\[A(t)=\frac{\alpha}{e^{\alpha t}}(I_{z}(t)+J_{z}(t))-\frac{\gamma\nabla f(z)}{e^{\alpha t}}, \quad B(t)=\gamma(\nabla f(x(t))-\nabla f(z))+\beta\nabla^{2}f(x(t))\dot{x}(t).\] 
			By using the triangle inequality we have
			\begin{equation}\label{eq:innerAB}
				\langle \dot{x}(t),\ddot{x}(t)\rangle=\langle\dot{x}(t),A(t)\rangle-
				\langle \dot{x}(t),B(t)\rangle  \geq \langle \dot{x}(t),A(t)\rangle-\|\dot{x}(t)\|\|B(t)\|.
			\end{equation}
			The first term can be bounded from below by using Corollary \ref{estimate}, giving
			
			\begin{align*}
				&\langle\dot{x}(t),A(t)\rangle \\
				=& -\left\langle \frac{1}{e^{\alpha t}}(I_{z}(t)+J_{z}(t))+\frac{\gamma}{\alpha}(1-e^{-\alpha t})\nabla f(z), \frac{\alpha}{e^{\alpha t}}(I_{z}(t)+J_{z}(t))-\frac{\gamma \nabla f(z)}{e^{\alpha t}} \right\rangle \\
				=& -\frac{\alpha}{e^{2\alpha t}}\|I_{z}(t)+J_{z}(t)\|^{2}+\frac{\gamma}{e^{2\alpha t}}\langle I_{z}(t)+J_{z}(t), \nabla f(z) \rangle\\
				&-\frac{\gamma (e^{\alpha t}-1)}{e^{2\alpha t}}\langle I_{z}(t)+J_{z}(t), \nabla f(z) \rangle+\frac{\gamma^{2}(e^{\alpha t}-1)\|\nabla f(z)\|^{2}}{\alpha e^{2\alpha t}}\\
				\geq& -\frac{\gamma^{2}(e^{\alpha t}-1)^{2}\|\nabla f(z)\|^{2}}{\alpha e^{2\alpha t}}\left[\frac{1-H(t)}{H(t)}\right]^{2}-\frac{\gamma^{2}(e^{\alpha t}-2)(e^{\alpha t}-1)\|\nabla f(z)\|^{2}}{\alpha e^{2\alpha t}}\left[\frac{1-H(t)}{H(t)}\right]\\
				&+\frac{\gamma^{2}(e^{\alpha t}-1)\|\nabla f(z)\|^{2}}{\alpha e^{2\alpha t}}\\
				=&\frac{\gamma^{2}(e^{\alpha t}-1)\|\nabla f(z)\|^{2}}{\alpha e^{2\alpha t}}\left(- (e^{\alpha t}-1)\left[\frac{1-H(t)}{H(t)}\right]^{2}-(e^{\alpha t}-2)\left[\frac{1-H(t)}{H(t)}\right]+1  \right)\\
				=&\frac{\gamma^{2}(e^{\alpha t}-1)\|\nabla f(z)\|^{2}}{\alpha e^{2\alpha t}H(t)^{2}}\left(H(t)^{2}-(e^{\alpha t}-1)(1-H(t))^{2}-(e^{\alpha t}-2)H(t)(1-H(t)) \right)\\
				=&\frac{\gamma^{2}(e^{\alpha t}-1)\|\nabla f(z)\|^{2}}{\alpha e^{2\alpha t}H(t)^{2}}\left(e^{\alpha t}H(t)-(e^{\alpha t}-1)\right).
			\end{align*}
			For the second term, from (\ref{dotx}) and Corollary \ref{estimate}, we can observe that
			\begin{align*}
				\|\dot{x}(t)\|&\leq \frac{1}{e^{\alpha t}}\|I_{z}(t)+J_{z}(t)\|+\frac{\gamma(e^{\alpha t}-1)\|\nabla f(z)\|}{\alpha e^{\alpha t}}\\
				&\leq \frac{\gamma(e^{\alpha t}-1)\|\nabla f(z)\|}{\alpha e^{\alpha t}}\left[\frac{1-H(t)}{H(t)}\right]+\frac{\gamma(e^{\alpha t}-1)\|\nabla f(z)\|}{\alpha e^{\alpha t}}\\
				&=\frac{\gamma(e^{\alpha t}-1)\|\nabla f(z)\|}{\alpha e^{\alpha t}H(t)},
			\end{align*}
			and
			\begin{align*}
				\|B(t)\|&\leq\gamma \|\nabla f(x(t))-\nabla f(z)\|+\beta \nabla^{2}f(x(t))\dot{x}(t)\\
				&\leq \frac{\gamma\|\nabla f(z)\|}{H(t)e^{\alpha t}}\left[\paren{\frac{L\gamma}{\alpha^{2}}-\frac{L\beta}{\alpha}}(1-e^{\alpha t})+\frac{L\gamma}{\alpha} te^{\alpha 
					t}\right],
			\end{align*}
			thus
			\[\|\dot{x}(t)\|\|B(t)\|\leq
			\frac{\gamma^{2}(e^{\alpha t}-1)\|\nabla f(z)\|^{2}}{\alpha e^{2\alpha t}H(t)^{2}}\left[\left(\frac{L\gamma}{\alpha^{2}}-\frac{L\beta}{\alpha}\right)(1-e^{\alpha t})+\frac{L\gamma}{\alpha} te^{\alpha 
				t}\right].\]
			Using the obtained inequalities in \eqref{eq:innerAB}, we obtain
			
			\begin{align*}    
				\langle \dot{x}(t),\ddot{x}(t)\rangle  &\geq\frac{\gamma^{2}(e^{\alpha t}-1)\|\nabla f(z)\|^{2}}{\alpha e^{2\alpha t}H(t)^{2}}\left(e^{\alpha t}H(t)-e^{\alpha t}+1-\left(\frac{L\gamma}{\alpha^{2}}-\frac{L\beta}{\alpha}\right)(1-e^{\alpha t})-\frac{L\gamma}{\alpha} te^{\alpha 
					t}\right),  
			\end{align*}
			which is the desired inequality.
		\end{proof}
		
		\begin{proposition}\label{function:G(t)}
			The function $G$ has a unique zero on $(0,\tau_1)$, which we denote by $\tau_3$.
		\end{proposition}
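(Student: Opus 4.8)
The plan is to reduce everything to the already-understood function $H$ by means of the identity
\[
G(t) = e^{\alpha t}\Bigl(H(t) - \tfrac{L\gamma}{\alpha}\,t\Bigr) - \bigl(e^{\alpha t}-1\bigr)\,P, \qquad P := 1 - \tfrac{L\gamma}{\alpha^{2}} + \tfrac{L\beta}{\alpha},
\]
which is obtained by expanding the definitions of $H$ in \eqref{H} and of $G$ in \eqref{eq:def_G}. From this identity I would first read off the endpoint values. Using $\lim_{t\to 0^{+}}H(t)=1$ it gives $\lim_{t\to 0^{+}}G(t)=1>0$, while $H(\tau_{1})=0$ gives $G(\tau_{1}) = -\tfrac{L\gamma}{\alpha}\tau_{1}e^{\alpha\tau_{1}} - (e^{\alpha\tau_{1}}-1)P$. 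To see that the latter is negative I would split according to the sign of $P$: the case $P\ge 0$ is immediate, and for $P<0$ the hypothesis $\beta\ge 0$ gives $|P|<\tfrac{L\gamma}{\alpha^{2}}$, so that the elementary inequality $1-s\le e^{-s}$ (with $s=\alpha\tau_{1}$, i.e. $e^{\alpha\tau_1}-1\le \alpha\tau_1 e^{\alpha\tau_1}$) yields $(e^{\alpha\tau_{1}}-1)|P| < \tfrac{L\gamma}{\alpha}\tau_{1}e^{\alpha\tau_{1}}$, hence $G(\tau_{1})<0$ again. Since $G$ is continuous on $(0,\tau_{1}]$, the intermediate value theorem then produces at least one zero in $(0,\tau_{1})$.

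For uniqueness, the key step will be to show that $G'(t)<0$ at every zero of $G$ in $(0,\tau_{1})$. Differentiating the identity,
\[
G'(t) = e^{\alpha t}\Bigl(\alpha H(t) - L\gamma t + H'(t) - \tfrac{L\gamma}{\alpha} - \alpha P\Bigr),
\]
and at a point where $G(t)=0$ one has $H(t) = (1-e^{-\alpha t})P + \tfrac{L\gamma}{\alpha}t$; substituting this collapses the bracket to $H'(t) - \alpha e^{-\alpha t}P - \tfrac{L\gamma}{\alpha}$. Now $H'(t)<0$ by Proposition \ref{function:H(t)}, and $-\alpha e^{-\alpha t}P - \tfrac{L\gamma}{\alpha}<0$ in both cases for $P$ (trivially when $P\ge 0$; when $P<0$ from $e^{-\alpha t}<1$ together with $\alpha|P|<\tfrac{L\gamma}{\alpha}$), so $G'<0$ at each such zero. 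To conclude, let $\tau_{3}$ be the smallest zero of $G$ in $(0,\tau_{1})$, which exists by the previous paragraph and satisfies $G>0$ on $(0,\tau_{3})$; the sign of $G'(\tau_{3})$ forces $G<0$ immediately after $\tau_{3}$, and the usual argument applies — a further zero in $(\tau_{3},\tau_{1})$ would have to be approached from below, hence with nonnegative derivative there, contradicting the claim. Therefore $\tau_{3}$ is the unique zero of $G$ in $(0,\tau_{1})$.

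I expect the only genuine obstacle to be the sign bookkeeping for the auxiliary constant $P$, which really is sign-indefinite in $\alpha,\beta,\gamma,L$: when $P<0$ both the endpoint estimate $G(\tau_{1})<0$ and the derivative estimate $G'<0$ at a zero hinge on the crude but sufficient bound $|P|<L\gamma/\alpha^{2}$, which is precisely where the standing assumption $\beta\ge 0$ enters. Everything else — the verification of the $G$–$H$ identity, the differentiation, and the two elementary exponential inequalities — is routine.
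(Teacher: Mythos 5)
Your identity $G(t)=e^{\alpha t}\bigl(H(t)-\tfrac{L\gamma}{\alpha}t\bigr)-(e^{\alpha t}-1)P$ with $P=1-\tfrac{L\gamma}{\alpha^2}+\tfrac{L\beta}{\alpha}$ checks out against \eqref{H} and \eqref{eq:def_G}, and the rest of your argument is correct: the endpoint values $G(0^+)=1$ and $G(\tau_1)<0$ (via the bound $|P|<L\gamma/\alpha^2$ when $P<0$, together with $e^{s}-1\le se^{s}$), the collapse of $G'$ at a zero of $G$ to $e^{\alpha t}\bigl(H'(t)-\alpha e^{-\alpha t}P-\tfrac{L\gamma}{\alpha}\bigr)<0$, and the first-zero argument for uniqueness. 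This is, however, a genuinely different route from the paper's. The paper computes $G'(t)=Le^{\alpha t}(e^{\alpha t}-1)^{-2}g(t)$ for an explicit $g$ with $g(0)=g'(0)=0$ and $g''<0$, concludes that $G$ is decreasing on all of $(0,\infty)$, and then invokes the limits $G(0^+)=1$ and $G(+\infty)=-\infty$; monotonicity gives uniqueness globally, not just "negative slope at each zero." What your approach buys is twofold: it recycles the already-proved monotonicity of $H$ (Proposition \ref{function:H(t)}) instead of a fresh second-derivative computation, and — more importantly — it explicitly verifies $G(\tau_1)<0$, which is exactly what is needed to place the zero inside $(0,\tau_1)$ rather than merely in $(0,\infty)$; the paper's proof leaves that localization step implicit. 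What the paper's approach buys is the stronger and cleaner conclusion that $G$ is decreasing everywhere, which makes Corollary \ref{infc} (namely $G>0$ on $(0,\tau_3)$) immediate; with your argument that sign information on $(0,\tau_3)$ still follows, but only from the "no zero before $\tau_3$ and $G(0^+)>0$" observation you make at the end, so it is worth stating explicitly that $G>0$ on $(0,\tau_3)$ is part of what you have shown.
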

		
		\begin{proof} 
			The derivative of $G$ is given by $G'(t)=Le^{\alpha t}(e^{\alpha t}-1)^{-2}g(t)$, where
			\[g(t)=\frac{\gamma}{\alpha}e^{2\alpha t}-2\beta e^{2\alpha t}-2\gamma te^{2\alpha t}+5\beta e^{\alpha t}+4\gamma te^{\alpha t}-\frac{4\gamma}{\alpha}e^{\alpha t}+\frac{3\gamma}{\alpha}-3\beta-\alpha\beta t.\]
			The sign of $g$ determines that of $G'$. Observe that
			\begin{eqnarray*}
				g'(t) & = & -4\alpha\beta e^{2\alpha t}(\beta + \gamma t) + \alpha e^{\alpha t}(5\beta + 4\gamma t) - \alpha\beta, \\
				g''(t) & = & -\alpha^2 e^{2\alpha t}(3\beta + 4)-\alpha e^{\alpha t}(e^{\alpha t}-1)\paren{4\alpha\gamma t +5\alpha\beta+4\gamma}.
			\end{eqnarray*}
			Since $g'(0)=0$ and $g''(t)<0$ for all $t>0$, we deduce that $g'(t)<0$ (and so $g$ is decreasing) for all $t>0$. Since $g(0)=0$, $g$ must be negative on $(0,\infty)$, and so $G$ is decreasing. We conclude by observing that $\lim_{t \to 0} G(t) =1$ and $\lim_{t \to +\infty} G(t) =-\infty$.
		\end{proof}
		
		Lemma \ref{L:bound_acceleration} then gives:
		
		\begin{corollary}\label{infc}
			For every $z\notin \operatorname{argmin}(f)$, we have $\tau_{3}\leq T(z)$.
		\end{corollary}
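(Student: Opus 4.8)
The plan is to read the conclusion off directly from Lemma~\ref{L:bound_acceleration} and Proposition~\ref{function:G(t)}, once we note that $z\notin\operatorname{argmin}(f)$ forces $\nabla f(z)\neq 0$. Since $\frac{d}{dt}\|\dot{x}(t)\|^{2}=2\langle\dot{x}(t),\ddot{x}(t)\rangle$, the restart time in \eqref{E:restart_time} is, by definition, the first instant at which this inner product fails to be strictly positive; so it suffices to show that $\langle\dot{x}(t),\ddot{x}(t)\rangle>0$ for every $t\in(0,\tau_3)$.

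First I would dispose of the degenerate case: if $\nabla f(z)=0$, the Polyak--\L{}ojasiewicz inequality \eqref{PL} gives $f(z)\le f^{*}$, hence $f(z)=f^{*}$, and since $f$ attains its minimum this would mean $z\in\operatorname{argmin}(f)$, contrary to hypothesis. Thus $\|\nabla f(z)\|>0$.

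Next, recall from Proposition~\ref{function:G(t)} that $\tau_3\in(0,\tau_1)$, so Lemma~\ref{L:bound_acceleration} applies on all of $(0,\tau_3)$ and yields
\[
\langle\dot{x}(t),\ddot{x}(t)\rangle\ \geq\ \frac{\gamma^{2}(e^{\alpha t}-1)\|\nabla f(z)\|^{2}}{\alpha e^{2\alpha t}H(t)^{2}}\,G(t),\qquad t\in(0,\tau_3).
\]
On $(0,\tau_3)$ every factor on the right is strictly positive: $\gamma^{2}\|\nabla f(z)\|^{2}>0$ by the previous step; $e^{\alpha t}-1>0$ and $\alpha e^{2\alpha t}>0$ for $t>0$; $H(t)^{2}>0$ because $H$ is decreasing (Proposition~\ref{function:H(t)}) with $\lim_{t\to0}H(t)=1$ and $H(\tau_1)=0$, so $H>0$ on $(0,\tau_1)\supseteq(0,\tau_3)$; and $G(t)>0$ because $G$ is decreasing (shown in the proof of Proposition~\ref{function:G(t)}) with $\lim_{t\to0}G(t)=1$ and $G(\tau_3)=0$. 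Hence $\langle\dot{x}(t),\ddot{x}(t)\rangle>0$ on $(0,\tau_3)$, so $\frac{d}{dt}\|\dot{x}(t)\|^{2}>0$ there, and the set defining $T(z)$ contains no point of $(0,\tau_3)$; therefore $T(z)\geq\tau_3$.

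There is essentially no obstacle: all the work is already contained in Lemma~\ref{L:bound_acceleration}, and the only points needing a word of justification are the positivity of $\nabla f(z)$ and the inclusion $\tau_3<\tau_1$, both of which are immediate. If anything, the only thing to be slightly careful about is distinguishing a strict inequality ($\langle\dot{x},\ddot{x}\rangle>0$) from the nonstrict one appearing in \eqref{E:restart_time}, which is exactly what makes the infimum bounded below by $\tau_3$.
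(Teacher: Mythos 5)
Your proof is correct and follows exactly the route the paper intends: the paper simply states that Lemma~\ref{L:bound_acceleration} ``then gives'' the corollary, and your argument fills in precisely those details (positivity of $\|\nabla f(z)\|$, of $H$ on $(0,\tau_1)$, and of $G$ on $(0,\tau_3)$, hence strict positivity of $\langle\dot{x}(t),\ddot{x}(t)\rangle$ there). Nothing further is needed.
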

		
		
		Next, we derive an upper bound for the speed restarting time. In what follows, we assume that $f$ satisfies condition (\ref{PL}) with $\mu>0$. 
		
		\begin{proposition}\label{upper bound}
			Let $z\notin \operatorname{argmin}(f)$. For each $\tau\in (0,\tau_{2})\cap \left(0,T(z)\right]$, we have 
			\[T(z)\leq \tau+\frac{\alpha}{2\mu\gamma(1-e^{-\alpha \tau})^{2}\Psi(\tau) }, \quad where\quad \Psi (\tau)=\left[2-\frac{1}{H(\tau)}\right]^{2}.\]
		\end{proposition}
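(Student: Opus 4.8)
The plan is to bound the lifetime $T(z)-\tau$ of the trajectory beyond time $\tau$ by combining the energy inequality \eqref{increase}, the monotonicity of the speed on $[0,T(z)]$, and a \emph{lower} bound for $\|\dot x(\tau)\|$. First, I would integrate \eqref{increase} over the interval $[\tau,T(z)]$ — legitimate since $\tau\le T(z)$ and $f\circ x$ is $\mathcal C^1$ — to get
\[f(x(T(z)))-f(x(\tau))\le -\frac{\alpha}{\gamma}\int_{\tau}^{T(z)}\|\dot x(t)\|^{2}\,dt.\]
By the definition \eqref{E:restart_time} of $T(z)$, $\frac{d}{dt}\|\dot x(t)\|^{2}>0$ on $(0,T(z))$, so $t\mapsto\|\dot x(t)\|$ is nondecreasing there; hence $\|\dot x(t)\|\ge\|\dot x(\tau)\|$ for $t\in[\tau,T(z)]$ and the integral is at least $(T(z)-\tau)\|\dot x(\tau)\|^{2}$. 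Using $f(x(T(z)))\ge f^{*}$ together with $f(x(\tau))\le f(z)$ (again from \eqref{increase}), rearranging gives
\[T(z)-\tau\le\frac{\gamma\,(f(z)-f^{*})}{\alpha\,\|\dot x(\tau)\|^{2}}.\]

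The core remaining step is the lower bound on $\|\dot x(\tau)\|$. I would start from the representation \eqref{dotx} of $\dot x(t)$, apply the reverse triangle inequality, and control $\|I_{z}(\tau)+J_{z}(\tau)\|$ via the first estimate of Corollary \ref{estimate}; this is where $\tau<\tau_{1}$ is needed, which is automatic because $\tau<\tau_{2}<\tau_{1}$ (as $H$ is decreasing with $H(\tau_{2})=\tfrac12>0=H(\tau_{1})$). This yields
\[\|\dot x(\tau)\|\ge\frac{\gamma(1-e^{-\alpha\tau})\|\nabla f(z)\|}{\alpha}\left(1-\frac{1-H(\tau)}{H(\tau)}\right)=\frac{\gamma(1-e^{-\alpha\tau})\|\nabla f(z)\|}{\alpha}\left(2-\frac{1}{H(\tau)}\right),\]
and squaring gives $\|\dot x(\tau)\|^{2}\ge\frac{\gamma^{2}(1-e^{-\alpha\tau})^{2}}{\alpha^{2}}\,\Psi(\tau)\,\|\nabla f(z)\|^{2}$. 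Finally, I would invoke the Polyak--\L{}ojasiewicz inequality \eqref{PL} at $z$ to replace $\|\nabla f(z)\|^{2}$ by $2\mu(f(z)-f^{*})$, substitute into the bound for $T(z)-\tau$, and cancel the common nonzero factor $f(z)-f^{*}$ (nonzero since $z\notin\operatorname{argmin}(f)$, which by \eqref{PL} also forces $\nabla f(z)\neq0$, making the division legitimate). This produces exactly $T(z)-\tau\le\frac{\alpha}{2\mu\gamma(1-e^{-\alpha\tau})^{2}\Psi(\tau)}$.

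The only genuine obstacle is the sign bookkeeping in the lower bound for $\|\dot x(\tau)\|$: one must check that $1-\frac{1-H(\tau)}{H(\tau)}=2-\frac{1}{H(\tau)}>0$, i.e.\ $H(\tau)>\tfrac12$, which is precisely why the statement restricts to $\tau<\tau_{2}$; everything else is a routine assembly of the estimates already furnished by Corollary \ref{estimate} and inequality \eqref{increase}.
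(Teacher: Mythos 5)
Your proposal is correct and follows essentially the same route as the paper: a lower bound on $\|\dot x(\tau)\|$ via \eqref{dotx}, the reverse triangle inequality and Corollary \ref{estimate} (valid since $\tau<\tau_2<\tau_1$ keeps $2-\tfrac{1}{H(\tau)}>0$), combined with the monotonicity of the speed on $[0,T(z)]$, integration of \eqref{increase} over $[\tau,T(z)]$, and the Polyak--\L{}ojasiewicz inequality. The only difference is cosmetic — you isolate $T(z)-\tau$ against $f(z)-f^*$ before invoking \eqref{PL}, whereas the paper chains the inequalities through $\tfrac{1}{2\mu}\|\nabla f(z)\|^2$ — and both yield the stated bound.
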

		
		\begin{proof}
			In view of (\ref{Int}) and (\ref{IJe}) and Corollary \ref{estimate}, we have
			\[\left\|\dot{x}(\tau)+\frac{\gamma}{\alpha}(1-e^{-\alpha \tau})\nabla f(z)\right\|=\frac{1}{e^{\alpha\tau}}\|I_{z}(t)+J_{z}(t)\|\leq \frac{\gamma}{\alpha}(1-e^{-\alpha\tau})\left[\frac{1-H(t)}{H(t)}\right]\|\nabla f(z)\|.\]
			The reverse triangle inequality gives
			\begin{equation}\label{triangle}
				\begin{array}{ll}
					\|\dot{x}(\tau)\| &\geq \frac{\gamma}{\alpha}(1-e^{-\alpha \tau})\|\nabla f(z)\|-\frac{\gamma}{\alpha}(1-e^{-\alpha \tau})\left[\frac{1-H(\tau)}{H(\tau)}  \right]\|\nabla f(z)\| \\
					&=\frac{\gamma}{\alpha}(1-e^{-\alpha \tau})\left[2-\frac{1}{H(\tau)}\right]\|\nabla f(z)\|.
				\end{array}  
			\end{equation}
			Observe that the expression on the last equality is positive since $\tau\in(0,\tau_{2})$. Taking $t\in [\tau, T(z)]$, and as $\|\dot{x}(t)\|^{2}$ increases in $[0,T(z)]$, \eqref{increase} gives
			
			\begin{align*}
				\frac{d}{dt}f(x(t)) &\leq -\frac{\alpha}{\gamma}\|\dot{x}(t)\|^{2}\\
				&\leq -\frac{\alpha}{\gamma}\|\dot{x}(\tau)\|^{2}  \\
				& \leq- \frac{\alpha}{\gamma}\left[\frac{\gamma}{\alpha}(1-e^{-\alpha\tau})\left[2-\frac{1}{H(\tau)}\right]\|\nabla f(z)\|\right]^{2}\\
				&=-\frac{\gamma}{\alpha}(1-e^{-\alpha\tau})^{2}\Psi(\tau)\|\nabla f(z)\|^{2}.
			\end{align*}
			
			Integrating over $[\tau,T(z)]$, we get
			\[
			f(x(T(z))-f(x(\tau))\leq -\left[ \frac{\gamma}{\alpha}(1-e^{-\alpha\tau})^{2}\Psi(\tau)\|\nabla f(z)\|^{2}\right](T(z)-\tau) 
			.\]
			It follows that 
			$$\frac{\gamma}{\alpha}(1-e^{-\alpha\tau})^{2}\Psi(\tau)\|\nabla f(z)\|^{2}(T(z)-\tau)\leq f(x(\tau))-f(x(T(z))  
			\leq f(z)-f^{*} 
			\leq\frac{1}{2\mu}\|\nabla f(z)\|^{2},
			$$
			and the result follows by arranging the terms.
		\end{proof}	
		
		We denote
		$$\tau^*=\operatorname{argmin}\left\{\tau+\frac{\alpha}{2\mu\gamma(1-e^{-\alpha \tau})^{2}\Psi(\tau)}:\tau\in (0,\tau_{2})\cap \left(0,T(z)\right] \right\}.$$
		
		
		\subsection{Function Value Decrease}\label{sec:results}
		The next result provides the ratio at which the function values reduce at each interval built by the restart criteria. 
		\begin{proposition}\label{fdecrease}
			Let $z\notin \operatorname{argmin}(f)$, and let $x$ be the solution of (\ref{ORI}) with initial condition $x(0)=z$ and $\dot{x}(0)=0$. Let $f$ satisfy (\ref{PL}) with $\mu>0$. For each $\tau\in (0,\tau_{2})\cap (0,T(z)]$, we have 
			\[f(x(t))-f^{*}\leq \left[1-\frac{2\mu\gamma}{\alpha}\Psi(\tau)\left(\tau+\frac{2}{\alpha}e^{-\alpha\tau}-\frac{1}{2\alpha}e^{-2\alpha \tau}-\frac{3}{2\alpha}\right)\right](f(z)-f^{*})\]
			for every $t\in \left[\tau, T(z)\right]$.
		\end{proposition}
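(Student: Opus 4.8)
The plan is to integrate inequality \eqref{increase} from $0$ to $t$, but instead of using the crude bound $\|\dot x(t)\|\ge\|\dot x(\tau)\|$ as in Proposition \ref{upper bound}, I would keep the full lower bound on $\|\dot x(u)\|$ valid on the whole interval $[0,t]$ whenever $u\le\tau$, and then exploit monotonicity of $\|\dot x\|^2$ on $[0,T(z)]$ for $u\in[\tau,t]$. Concretely, from \eqref{triangle} (or rather its $u$-dependent version, obtained by running the same reverse-triangle-inequality argument with $\tau$ replaced by any $u\in(0,\tau]$, using that $H$ is decreasing so $2-\frac1{H(u)}\ge 2-\frac1{H(\tau)}$ for $u\le\tau<\tau_2$), one gets
\[
\|\dot x(u)\|\ \ge\ \frac{\gamma}{\alpha}(1-e^{-\alpha u})\Big[2-\tfrac{1}{H(\tau)}\Big]\|\nabla f(z)\|\qquad\text{for }u\in(0,\tau],
\]
and hence $\|\dot x(u)\|^2\ge \frac{\gamma^2}{\alpha^2}(1-e^{-\alpha u})^2\Psi(\tau)\|\nabla f(z)\|^2$ on $(0,\tau]$.

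Next I would integrate \eqref{increase} over $[0,t]$ for $t\in[\tau,T(z)]$, splitting the integral at $\tau$:
\[
f(x(t))-f(z)\ \le\ -\frac{\alpha}{\gamma}\int_0^{t}\|\dot x(u)\|^2\,du\ \le\ -\frac{\alpha}{\gamma}\int_0^{\tau}\|\dot x(u)\|^2\,du,
\]
since the integrand is nonnegative and we may simply discard the part over $[\tau,t]$. (Alternatively one could retain the $[\tau,t]$ part using $\|\dot x\|^2\ge\|\dot x(\tau)\|^2$ there, but the stated bound only needs the $[0,\tau]$ contribution, so I would use the cleaner discard.) Substituting the lower bound on $\|\dot x(u)\|^2$ and computing $\int_0^\tau (1-e^{-\alpha u})^2\,du=\tau+\frac{2}{\alpha}e^{-\alpha\tau}-\frac{1}{2\alpha}e^{-2\alpha\tau}-\frac{3}{2\alpha}$ gives
\[
f(x(t))-f(z)\ \le\ -\frac{\gamma}{\alpha}\,\Psi(\tau)\Big(\tau+\tfrac{2}{\alpha}e^{-\alpha\tau}-\tfrac{1}{2\alpha}e^{-2\alpha\tau}-\tfrac{3}{2\alpha}\Big)\|\nabla f(z)\|^2.
\]
Finally I would invoke the Polyak–Łojasiewicz inequality \eqref{PL} in the form $\|\nabla f(z)\|^2\ge 2\mu(f(z)-f^*)$, add $f(z)-f^*$ to both sides, and rearrange:
\[
f(x(t))-f^*\ \le\ \Big(f(z)-f^*\Big)-\frac{2\mu\gamma}{\alpha}\Psi(\tau)\Big(\tau+\tfrac{2}{\alpha}e^{-\alpha\tau}-\tfrac{1}{2\alpha}e^{-2\alpha\tau}-\tfrac{3}{2\alpha}\Big)(f(z)-f^*),
\]
which is exactly the claimed inequality.

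The routine obstacle is the elementary integral $\int_0^\tau(1-e^{-\alpha u})^2\,du$, which just needs expanding the square; the more delicate point is justifying that the per-$u$ lower bound on $\|\dot x(u)\|$ from the reverse triangle inequality is legitimate for every $u\in(0,\tau]$ and that the bracket $2-\frac1{H(u)}$ stays positive there — this is where $\tau<\tau_2$ (so $H(\tau)>\tfrac12$, and $H$ decreasing gives $H(u)\ge H(\tau)>\tfrac12$, whence $2-\frac1{H(u)}>0$) is essential, together with the sign conventions already set up in Corollary \ref{estimate} and \eqref{triangle}. One should also note that the bracketed coefficient in the statement must be verified to lie in $(0,1)$ for the bound to be meaningful as a contraction; this is implicitly what ties the result to the definition of $\tau^*$ and to the constant $Q$ in \eqref{fdecrease}, but it is not needed to prove the displayed inequality itself.
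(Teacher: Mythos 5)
Your proposal is correct and follows essentially the same route as the paper's proof: a pointwise lower bound on $\|\dot x(u)\|$ from the reverse triangle inequality, monotonicity of $H$ to replace $H(u)$ by $H(\tau)$ (with positivity of $2-\tfrac{1}{H(\tau)}$ guaranteed by $\tau<\tau_2$), integration of \eqref{increase} over $(0,\tau)$, the Polyak--\L{}ojasiewicz inequality, and monotonicity of $f(x(\cdot))$ up to $T(z)$. The only difference is cosmetic: you integrate over $[0,t]$ and discard the tail, whereas the paper integrates over $(0,\tau)$ and then invokes $f(x(t))\le f(x(\tau))$.
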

		
		\begin{proof}
			Let $s\in (0,\tau)$, using \eqref{increase} and (\ref{triangle}), we obtain
			\begin{align*}
				\frac{d}{ds}f(x(s)) &\leq- \frac{\alpha}{\gamma}\|\dot{x}(s)\|^{2}  \\
				& \leq -\frac{\alpha}{\gamma}\left[\frac{\gamma}{\alpha}(1-e^{-\alpha s})\left[2-\frac{1}{H(s)}\right]\|\nabla f(z)\|\right]^{2}\\
				&
				\leq-\frac{\gamma}{\alpha}(1-e^{-\alpha s})^{2}\Psi(\tau)\|\nabla f(z)\|^{2},
			\end{align*}
			where the last inequality is obtained since $H(t)$ is decreasing in $(0,\tau_{1})$, which contains $(0,\tau)$. 
			Integrating over $(0,\tau)$, using (\ref{PL}), we obtain
			\begin{align*}
				f(x(\tau))-f^{*} & \leq f(z)-f^{*}-\int_{0}^{\tau}\frac{\gamma}{\alpha}(1-e^{-\alpha s})^{2}\Psi(\tau)\|\nabla f(z)\|^{2}ds \\
				& \leq f(z)-f^{*}-\frac{\gamma}{\alpha}\Psi(\tau)\|\nabla f(z)\|^{2}\int_{0}^{\tau}(1-2e^{-\alpha s}+e^{-2\alpha s})ds\\
				&=f(z)-f^{*}-\frac{\gamma}{\alpha}\Psi(\tau)\|\nabla f(z)\|^{2}\left(\tau+\frac{2}{\alpha}e^{-\alpha\tau}-\frac{1}{2\alpha}e^{-2\alpha \tau}-\frac{3}{2\alpha}\right)\\
				&\leq \left[1-\frac{2\mu\gamma}{\alpha}\Psi(\tau)\left(\tau+\frac{2}{\alpha}e^{-\alpha\tau}-\frac{1}{2\alpha}e^{-2\alpha \tau}-\frac{3}{2\alpha}\right)\right](f(z)-f^{*}).
			\end{align*}
			As $t\in \left[\tau, T(z)\right]$, Remark \ref{increase}, gives $f(x(t))\leq f(x(\tau))$ and the result is obtained.
		\end{proof}

		\section{Some Numerical Illustrations}
		\label{sec:numerical}

		\if{
			
			\subsection{A simple  example: explicit computation of the restart time}
			Consider \eqref{ORI_intr} with $x(0)=x_0 \in \R$, $\dot{x}(0)=0$. Let $f:\R \to \R$ be defined by $f(x) = \frac{1}{2}x^2$. We obtain the system
			\begin{equation}\label{eq:sist_toy}
				\ddot{x}(t) + (\alpha+\beta)\dot{x}(t)+\gamma x(t) = 0.
			\end{equation}
			The solutions of \eqref{eq:sist_toy} can be computed depending on the relation between the parameters. Then, the product $\dot{x}(t)\ddot{x}(t)$ that defines the speed restart time can be used to explicitly find the speed restart time. This time can be compared with the lower bound given by the positive root of $G(t)$ defined in \eqref{eq:def_G}. We study three different cases. 
			
			\subsubsection*{Case 1: $\Delta = (\alpha+\beta)^2 - 4\gamma >0$}
			Here, the characteristic equation has two distinct real solutions, given by
			\[\kappa_\pm = -\dfrac{(\alpha+\beta)}{2}\pm \dfrac{\sqrt{\Delta}}{2}.\]
			Notice that as $\alpha$ and $\beta$ are positive, then the two solutions are negative. The general solution of \eqref{eq:sist_toy} is given by
			\[x(t) = C_1 e^{\kappa_{+}t} + C_2 e^{\kappa_{-}t}.\]
			Imposing the initial conditions, $C_1 + C_2 = x_0$ and $\kappa_{+}C_1 + \kappa_{-}C_2 =0$, we get
			\[C_1 = -\dfrac{x_0\kappa_{-}}{\sqrt{\Delta}}, \quad C_2 = \dfrac{x_0\kappa_{+}}{\sqrt{\Delta}}.\]
			Computing the derivatives, we obtain
			\begin{align*}
				\dot{x}(t) &= C_1\kappa_{+}e^{\kappa_{+}t} + C_2\kappa_{-}e^{\kappa_{-}t}, \\
				\ddot{x}(t) &= C_1\kappa_{+}^2e^{\kappa_{+}t} + C_2\kappa_{-}^2e^{\kappa_{-}t}.
			\end{align*}
			Thus, 
			\[\dot{x}(t)\ddot{x}(t) = \delta e^{-(\alpha+\beta)t}\paren{\kappa_{+}e^{\sqrt{\Delta}t} + \kappa_{-}e^{-\sqrt{\Delta}t} - \dfrac{\gamma}{\delta}C_1C_2(\alpha+\beta)},\]
			where $\delta = \kappa_{+}^2C_1^2 = \kappa_{-}^2C_2^2$ (we have also used that $\kappa_{+}\kappa_{-}=\gamma$). From the expressions for $C_1$ and $C_2$, we obtain
			\[\dot{x}(t)\ddot{x}(t) = \delta e^{-(\alpha+\beta)t}\paren{\kappa_{+}e^{\sqrt{\Delta}t} + \kappa_{-}e^{-\sqrt{\Delta}t} +\alpha+\beta}.\]
			Then, the speed restart time will be given by the study of the zeros of  the function 
			\[\varphi(t) =\kappa_{+}e^{\sqrt{\Delta}t} + \kappa_{-}e^{-\sqrt{\Delta}t} +\alpha+\beta. \]
			The equation $\varphi(t)=0$ can be solved, giving that the function has two zeros, one in $t=0$, and the other one in
			\begin{equation}\label{eq:restart_explicit1}
				t_{\text{sr}} = \dfrac{1}{\sqrt{\Delta}}\ln\paren{\dfrac{\kappa_{-}}{\kappa_{+}}},
			\end{equation}
			when the speed restart occurs. Figure \ref{fig:delta_pos} shows the function $\varphi(t)$ and the theoretical function $G(t)$ giving the lower bound, in the case $\alpha=3$, $\beta=1$, $\gamma=1$. It can be observed the  actual restart time $t_{\text{sr}}$ given by \eqref{eq:restart_explicit1} and its approximation given by $G(t)=0$. 
			
			\subsubsection*{Case 2: $\Delta = (\alpha+\beta)^2 - 4\gamma =0$}
			In this case, the general solution for \eqref{eq:sist_toy} is 
			\[x(t) = C_1 e^{-\frac{(\alpha+\beta)}{2}t} + C_2 te^{-\frac{(\alpha+\beta)}{2}t}.\]
			Imposing the initial conditions, we obtain that 
			\[C_1 = x_0, \quad C_2 = \dfrac{(\alpha+\beta)}{2}x_0.\]
			Computing the derivatives, we obtain 
			\begin{align*}
				\dot{x}(t) &= -x_0\gamma t e^{-\frac{(\alpha+\beta)}{2}t}, \\
				\ddot{x}(t) &= -x_0\gamma e^{-\frac{(\alpha+\beta)}{2}t}\paren{1 - \dfrac{(\alpha+\beta)}{2}t}.	
			\end{align*}
			Then, it is easy to see that $\dot{x}(t)\ddot{x}(t)$ is zero at $t=0$ and in
			\[t_{\text{sr}} = \dfrac{2}{\alpha+\beta},\]
			when the speed restart occurs. Figure \ref{fig:delta_zero} compares the function $\eta(t) = 1 - \frac{(\alpha+\beta)}{2}t$ and $G(t)$, in the case $\alpha=3$, $\beta=1$, $\gamma=4$. 
			
			\subsubsection*{Case 3: $\Delta = 4\gamma -(\alpha+\beta)^2  >0$}
			In this case, the general solution for \eqref{eq:sist_toy} is
			\[x(t) = e^{-\frac{(\alpha+\beta)}{2}t}\paren{C_1\cos\paren{\dfrac{\sqrt{\Delta}}{2}t} + C_2 \sin\paren{\dfrac{\sqrt{\Delta}}{2}t}}.\]
			Imposing the initial conditions we obtain that 
			\[C_1 = x_0, \quad C_2 = \dfrac{(\alpha+\beta)}{\sqrt{\Delta}}x_0.\]
			Computing the derivatives, we obtain 
			\begin{align*}
				\dot{x}(t) &= -\dfrac{2\gamma x_0}{\sqrt{\Delta}}e^{-\frac{(\alpha+\beta)}{2}t}\sin\paren{\dfrac{\sqrt{\Delta}}{2}t}, \\
				\ddot{x}(t) &= e^{-\frac{(\alpha+\beta)}{2}t}\paren{-x_0\gamma\cos\paren{\dfrac{\sqrt{\Delta}}{2}t} + \gamma x_0 \dfrac{(\alpha+\beta)}{\sqrt{\Delta}} \sin\paren{\dfrac{\sqrt{\Delta}}{2}t}}.
			\end{align*}
			Then, 
			\[\dot{x}(t)\ddot{x}(t) =2\dfrac{\gamma^2 x_0^2}{\sqrt{\Delta}}e^{-(\alpha+\beta)t}\paren{\sin\paren{\dfrac{\sqrt{\Delta}}{2}t}\cos\paren{\dfrac{\sqrt{\Delta}}{2}t}-\dfrac{(\alpha+\beta)}{\sqrt{\Delta}}\sin^2\paren{\dfrac{\sqrt{\Delta}}{2}t}}.\]
			We study the function
			\[\phi(t) = \sin\paren{\dfrac{\sqrt{\Delta}}{2}t}\paren{\cos\paren{\dfrac{\sqrt{\Delta}}{2}t}-\dfrac{(\alpha+\beta)}{\sqrt{\Delta}}\sin\paren{\dfrac{\sqrt{\Delta}}{2}t}}.\]
			It is easy to check that $\phi(0)=0$, and computing its derivative we get
			\[\phi'(t) = \dfrac{1}{2}\paren{\sqrt{\Delta}\cos\paren{\sqrt{\Delta} t}-(\alpha+\beta)\sin\paren{\sqrt{\Delta} t}}.\]
			The zeros of the derivative can be computed and as $\phi'(0)>0$, the derivative starts being positive after $t=0$ and hence, $\phi(t)$ starts increasing after $t=0$ . Then, $\phi(t)$ is a positive function between $t=0$ and its first positive zero, when the restart occurs. Notice that the function will be zero if 
			\[\sin\paren{\dfrac{\sqrt{\Delta}}{2}t}=0\Longleftrightarrow t = \dfrac{2k\pi}{\sqrt{\Delta}},\]
			being $k$ an integer. On the other hand, the function is zero if
			\[\tan\paren{\dfrac{\sqrt{\Delta}}{2}t} =\dfrac{\sqrt{\Delta} }{(\alpha+\beta)}
			\Longleftrightarrow t = \dfrac{2}{\sqrt{\Delta}}\arctan\paren{\dfrac{\sqrt{\Delta} }{(\alpha+\beta)}} + \dfrac{2k\pi}{\sqrt{\Delta}},\]
			with $k$ an integer. Then, the first positive zero of the function and hence, the speed restart time is attained at 
			\[t_{\text{sr}} = \min\set{\dfrac{2\pi}{\sqrt{\Delta}},\dfrac{2}{\sqrt{\Delta}}\arctan\paren{\dfrac{\sqrt{\Delta} }{(\alpha+\beta)}}} = \dfrac{2}{\sqrt{\Delta}}\arctan\paren{\dfrac{\sqrt{\Delta} }{(\alpha+\beta)}},\]
			since $\arctan$ is bounded from above by $\pi/2$. Figure \ref{fig:delta_neg} shows the function $\phi(t)$ and the theoretical lower bound given by $G(t)$, for the case $\alpha=3$, $\beta=1$, $\gamma=20$. 
			
			\begin{figure}[htbp]
				\centering
				\subfigure[$\alpha=3$, $\beta=1$, $\gamma=1$]{\includegraphics[width=0.32\textwidth]{Figures/toycase1.eps}\label{fig:delta_pos}}
				\subfigure[$\alpha=3$, $\beta=1$, $\gamma=4$]{\includegraphics[width=0.32\textwidth]{Figures/toycase2.eps}\label{fig:delta_zero}}
				\subfigure[$\alpha=3$, $\beta=1$, $\gamma=20$]{\includegraphics[width=0.32\textwidth]{Figures/toycase3.eps}\label{fig:delta_neg}}
				\caption{Depiction of the functions defining the speed restart in each case and function $G(t)$ giving the theoretical bound when $G(t)=0$. For each case we use $x_0=1$.}
				\label{fig:functions_toy}
			\end{figure}
			
			\subsubsection*{Gradient restart}
			Although it is beyond the scope of this paper, we can consider the gradient restart criteria or function values: there, the restart time happens at
			\[T(x_0) = \sup\set{t > 0 \,|\, \inner{\nabla f(x(u))}{\dot{x}(u)} < 0, \quad \forall u \in (0,t)}.\]
			In the context of the previous example, the product to analyze is simply $\dot{x}(t)x(t)$. Considering Case 3, we obtain that 
			\[\dot{x}(t)x(t) = -2\dfrac{\gamma x_0^2}{\sqrt{\Delta}}e^{-(\alpha+\beta)t} \sin\paren{\dfrac{\sqrt{\Delta}}{2}t}\paren{\cos\paren{\dfrac{\sqrt{\Delta}}{2}t}+\dfrac{(\alpha+\beta)}{\sqrt{\Delta}}\sin\paren{\dfrac{\sqrt{\Delta}}{2}t}}.\]
			The zeros of this expression are attained at
			\[t = \dfrac{2k\pi}{\sqrt{\Delta}} \; \lor \;  t = \dfrac{2}{\sqrt{\Delta}}\arctan\paren{-\dfrac{\sqrt{\Delta} }{(\alpha+\beta)}} + \dfrac{2k\pi}{\sqrt{\Delta}}, \quad k \in \mathbb{Z}. \]
			The first one of this times is when the gradient restart occurs, giving
			\[t_{\text{gr}} = \dfrac{2}{\sqrt{\Delta}}\arctan\paren{-\dfrac{\sqrt{\Delta} }{(\alpha+\beta)}} + \dfrac{2\pi}{\sqrt{\Delta}}.\]
			As $\arctan$ is an odd and bounded function , we obtain that 
			\[t_{\text{gr}} = \dfrac{2\pi}{\sqrt{\Delta}} - t_{\text{sr}} > \dfrac{\pi}{\sqrt{\Delta}} > t_{\text{sr}}. \]

		}\fi 
		
		
		In this section, we report the findings of some numerical experiments that illustrate how the convergence is improved by the speed restart scheme. \\
		
		We consider the function defined in \eqref{E:example_function}, with $n=3$ and $\rho=10$, namely
		\[f(x)=\frac{1}{2}(x_1^{2}+10 x_2^{2}+100x_3^{2}).\]
		
		\begin{example}[The solutions of \eqref{ORI_intr}] \label{EG:num_cont} 
			
			We take $\alpha=3$, $\beta\in\{0,6\}$ and $\gamma$, defined by \eqref{eq:gamma_example}, with $i=2$ and $\varepsilon\in\{10^{-1},10,10^2\}$, and display the evolution of the objective function values on the trajectories starting from $x(0)=(1,1,1)$ and initial velocity $\dot x(0)=(0,0,0)$, with and without restart. The results are shown in Figure \ref{fig:both with vare}. For the restarted trajectories, the results of approximating $f(x(t))\sim Ae^{-Bt}$, with $A,B>0$, via linear regression, are presented in Table \ref{tab:coef_regression_cont}.  \\
			
			\begin{figure}[htbp]
				\centering
				\subfigure[$\varepsilon=10^{-1}$]{\includegraphics[width=0.32\textwidth]{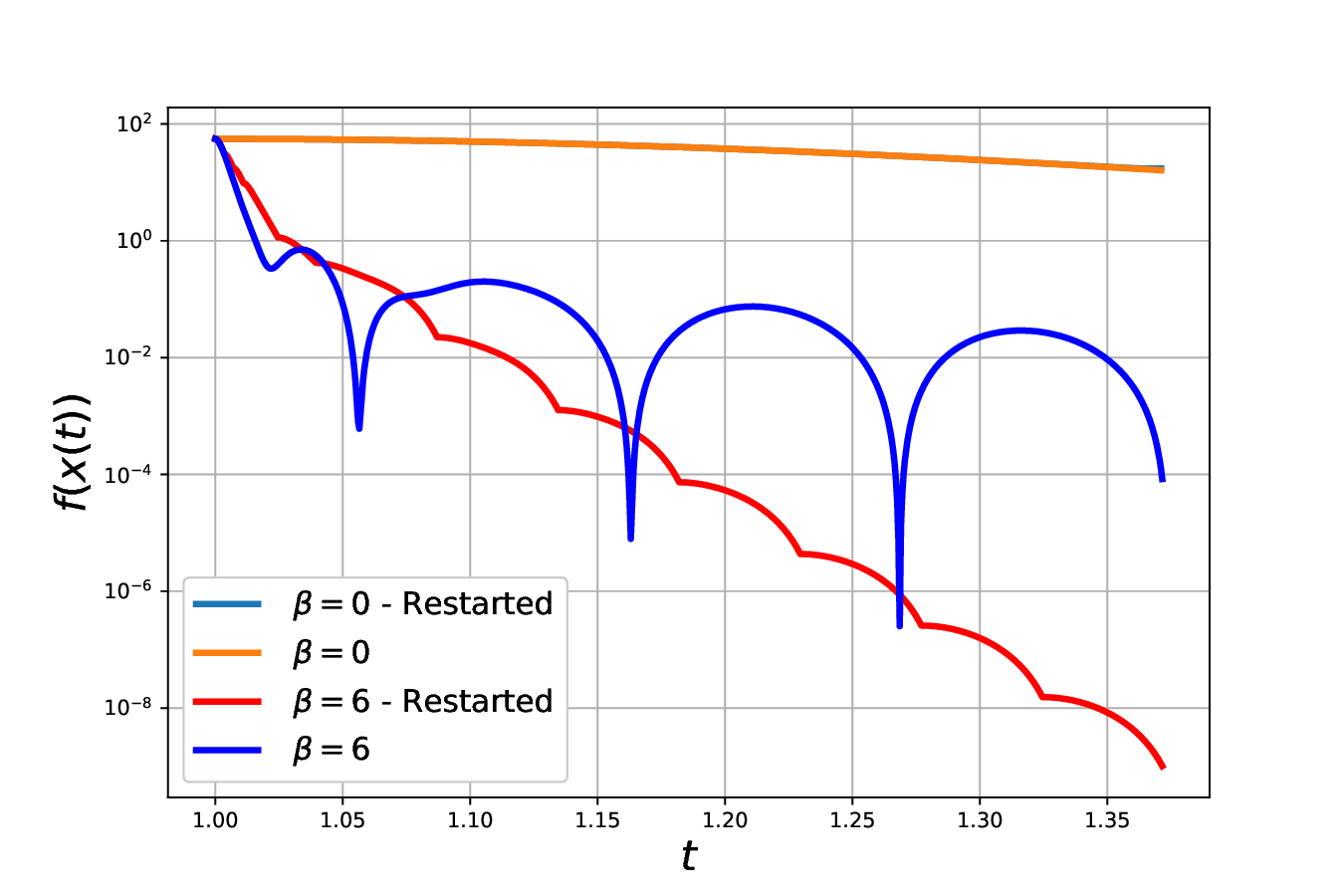}\label{fig:all0.1}}
				\subfigure[$\varepsilon=10$]{\includegraphics[width=0.32\textwidth]{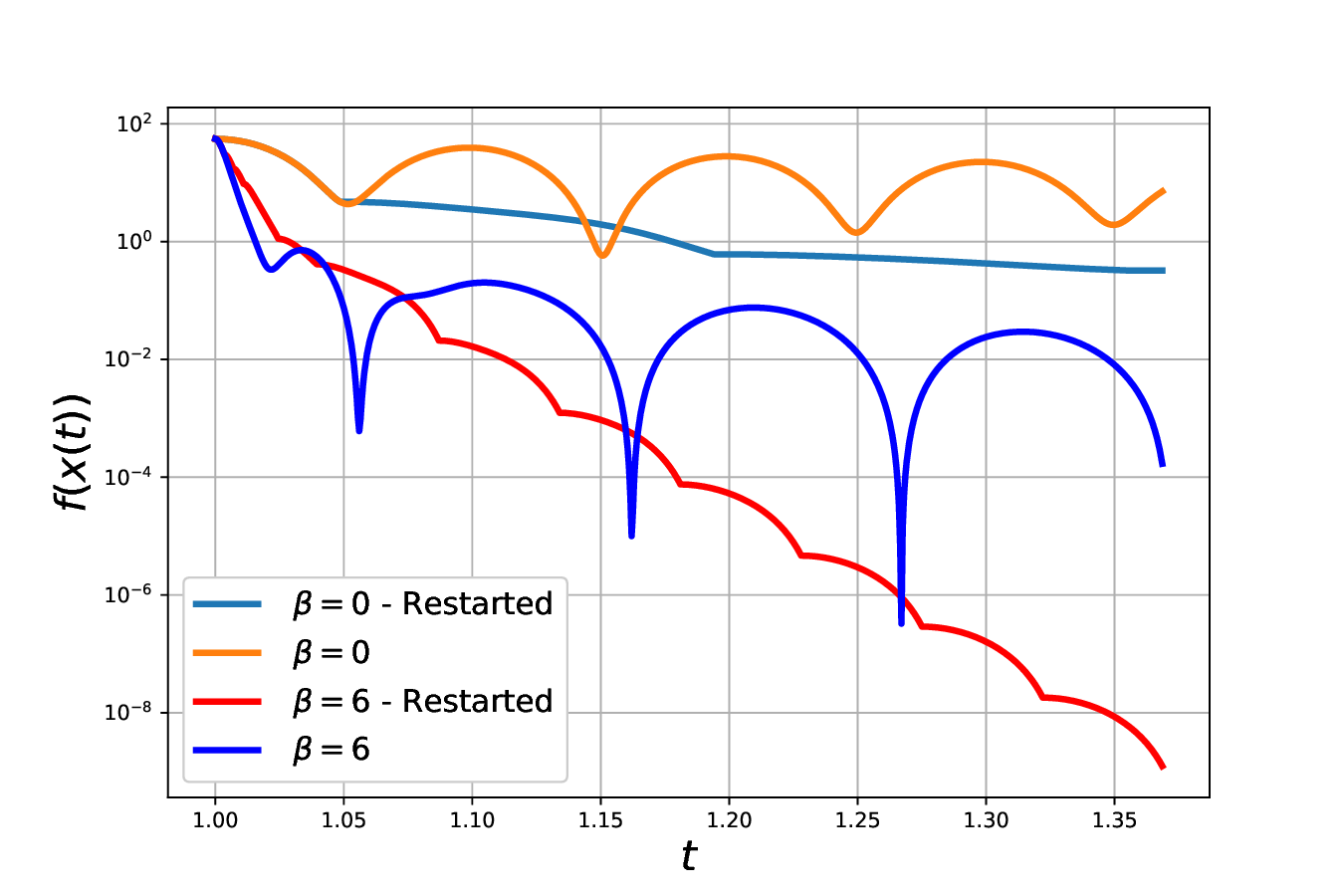}\label{fig:all10}}
				\subfigure[$\varepsilon=1000$]{\includegraphics[width=0.32\textwidth]{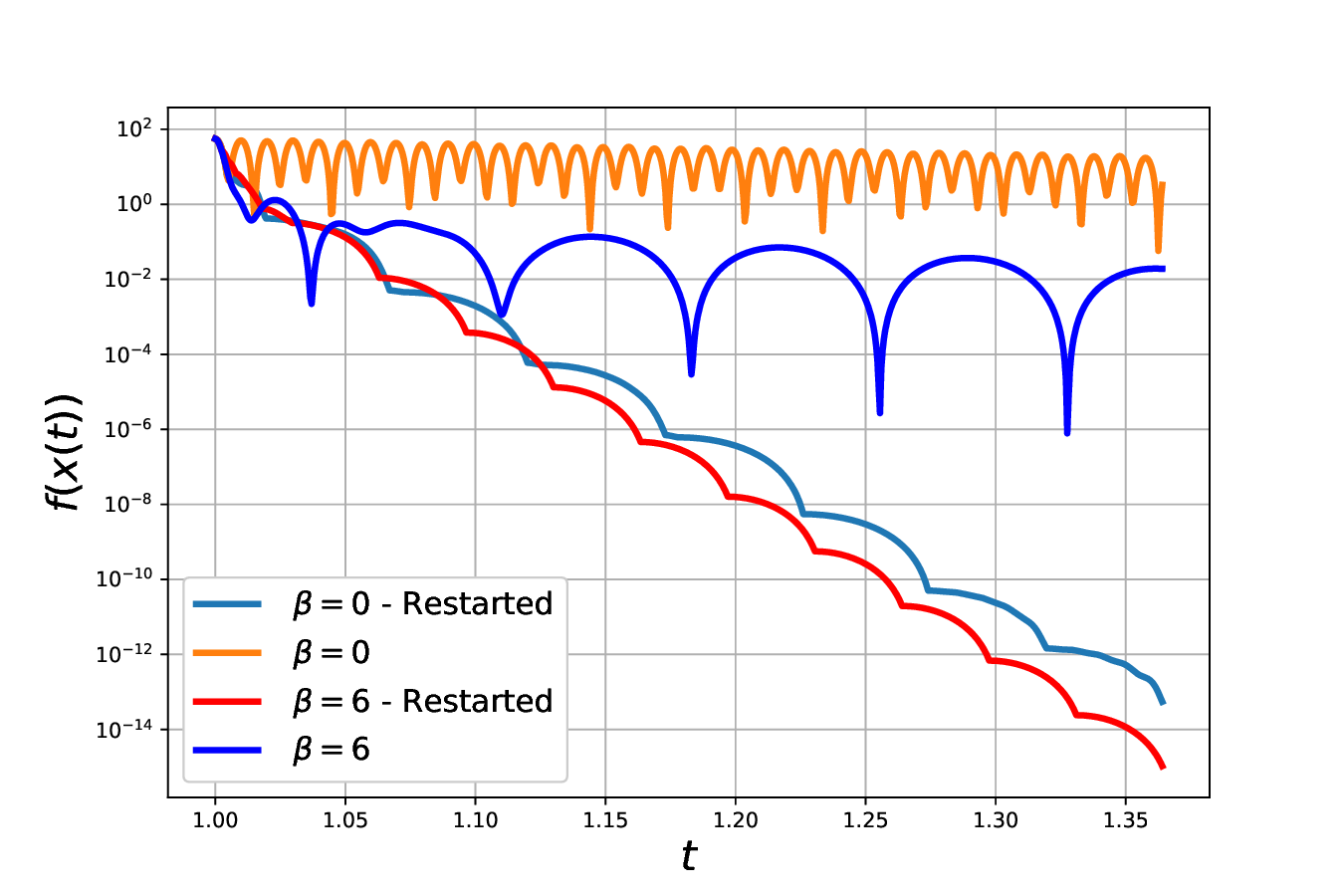}\label{fig:all1000}}
				\caption{Evolution of the objective function values along the trajectories in Example \ref{EG:num_cont}. In all cases, $\alpha=3$ and $\gamma$ is defined by \eqref{eq:gamma_example}, with $i=2$. The initial condition is $x(0)=(1,1,1)$ and $\dot x(0)=(0,0,0)$.
				}
				\label{fig:both with vare}
			\end{figure}
			
			\begin{table}[h]
				\centering
				\begin{tabular}{ccc|cc|cc}
					\toprule
					&\multicolumn{2}{c}{$\varepsilon=10^{-1}$} &\multicolumn{2}{c}{$\varepsilon=10$} 
					&\multicolumn{2}{c}{$\varepsilon=100$} \\
					\midrule
					& $\beta=0$ & $\beta=6$ & $\beta=0$ & $\beta=6$  & $\beta=0$ & $\beta=6$\\
					\midrule
					$A$ & 63.24   & 7.34  & 5.92    & 6.68 & 8.99   &   14.62\\
					$B$ & 2.99 & 59.72   & 6.62  & 59.14 & 88.51  & 101.57 \\
					\bottomrule
				\end{tabular}
				\caption{Coefficients in the linear regression, when approximating $f\big(x(t)\big)\sim Ae^{-Bt}$ in Example \ref{EG:num_cont}.}
				\label{tab:coef_regression_cont}
			\end{table}
			
			The performance is better when the Hessian-driven damping term is present, whether or not there is restarting. The restarted trajectories consistently perform better in the long run, although the non-restarted ones do attain some lower values at the beginning. This was expected, in view of the results of \cite{SBC16,MP23}. The regularity in the oscillatory behavior of \eqref{eq:din_alpha_beta} and \eqref{ORI_intr} seems to be {\it inherited} by the restarted trajectories, as can be seen from the way the restarting times are distributed. Another interesting phenomenon is that, although \eqref{eq:din_alpha_beta} oscillates more than \eqref{ORI_intr} for the highest value of $\varepsilon$ (whence that of $\gamma$), the corresponding restarting times are more spaced. Moreover, for each case, we consider the sequence $(T_{i+1} - T_{i})_{i \in \mathbb{N}}$ given by the restart times and we compute the mean value and the variance. The results are displayed on Table \ref{tab:mean_variance}. 
			
			\begin{table}[h]
				\centering
				\begin{tabular}{ccc|cc|cc}
					\toprule
					&\multicolumn{2}{c}{$\varepsilon=10^{-1}$} &\multicolumn{2}{c}{$\varepsilon=10$} 
					&\multicolumn{2}{c}{$\varepsilon=100$} \\
					\midrule
					& $\beta=0$ & $\beta=6$ & $\beta=0$ & $\beta=6$  & $\beta=0$ & $\beta=6$\\
					\midrule
					Mean & 7.01e-01   &  3.79e-02 &  3.70e-01 & 3.76e-02 &  3.39-e2  & 2.59e-2 \\
					Variance & 3.76e-01 & 2.85e-04 & 3.50e-03 & 2.79e-04 & 3.48e-4 & 1.51e-4\\
					\bottomrule
				\end{tabular}
				\caption{.}
				\label{tab:mean_variance}
			\end{table}

		\end{example}

		\begin{example}[A brief algorithmic exploration] \label{EG:num_algo}
			Several discretizations of \eqref{ORI_intr} with respect to time lead to first order algorithms that generate minimizing sequences for $f$. Although the main focus of this paper is not to analyze the numerical performance of algorithms, we present numerical results to illustrate the effect of including a speed restart routine on Algorithm \ref{algorithm} below, inspired by \cite{ACFR22}. \\
			
			\begin{algorithm}
				\caption{Inertial Gradient Algorithm with Hessian-Driven Damping and Three Constant Coefficients - Speed Restart Scheme}
				\begin{algorithmic}
					\STATE Given $x_{0},x_{1}\in\mathbb{R}^{n}$, $N$ and $h>0$. \\
					\textbf{for} $k=1$ to $N$ \textbf{do}:
					\STATE\quad 1. $y_{k}=x_{k}+(1-\alpha h)(x_{k}-x_{k-1})-\beta h(\nabla f(x_{k})-\nabla f(x_{k-1}))$.
					\STATE\quad 2. $x_{k+1}=y_{k}-\gamma h^{2}\nabla f(y_{k})$.\\
					\textbf{if} $\|x_{k+1}-x_{k}\|<\|x_{k}-x_{k-1}\|$ \textbf{then}
					\STATE\quad$x_{k}=x_{k-1}$\\
					\textbf{else}
					\STATE\quad$k=k+1$\\
					\textbf{end}\\
					\textbf{return} $x_{N}$.
				\end{algorithmic}
				\label{algorithm}
			\end{algorithm}
			
			We consider the function defined in \eqref{E:example_function} with $n=3$ and $\rho=10$. For the algorithm parameters we consider $\alpha=3$, $\beta=6$, $h=10^{-3}$ and $\gamma$ satisfying condition \eqref{eq:gamma_example} with $i=2$ and $\varepsilon\in\{10^{-1},10,10^2\}$. The initial point $x_1=x_{0}$ is generated randomly. Figure \ref{fig:discrete time} displays the function values obtained for Algorithm \ref{algorithm}, with and without speed restart routine. For reference, we also include the {\it warm start} variant proposed in \cite{MP23}, which consists in including one additional cycle at the beginning, ending in a function value restart, instead of a speed restart. For the speed restart scheme, we also perform an aprroximation of the function values as $f(x_k) \sim A e^{-Bk}$. Table \ref{tab:coef_regression_discrete} displays the values of $A$ and $B$ obtained for each value of $\varepsilon$ considered.  \\
			
			The results are similar to those obtained for the continuous case. One aspect to note is that the warm start does not seem to improve the slope of the corresponding plots, in contrast with the results in \cite{MP23} for \eqref{eq:AVD} and \eqref{eq:DIN-AVD}.    
		\end{example}
		
		\begin{figure}[htbp]
			\centering
			\subfigure[$\varepsilon=0.1$]{\includegraphics[width=0.32\textwidth]{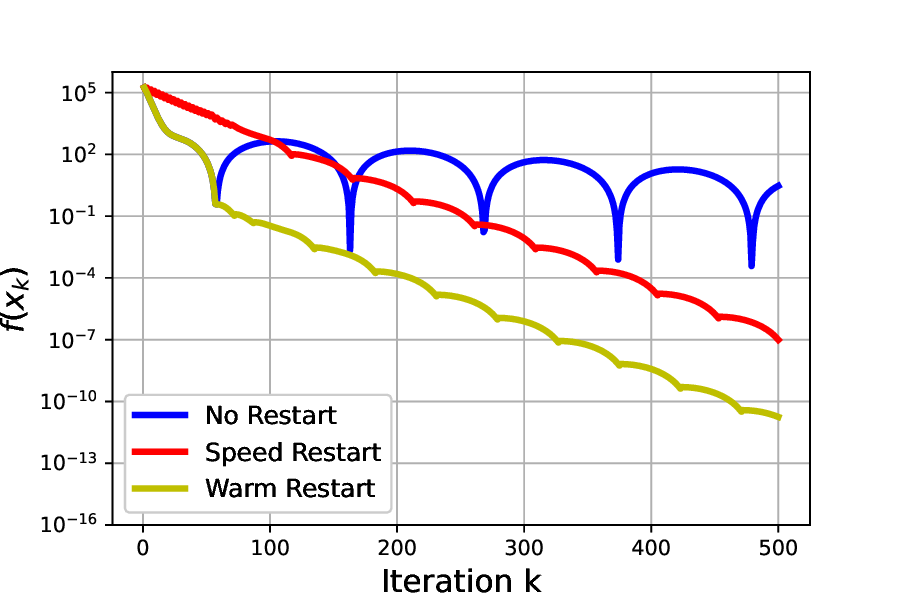}\label{fig:disc_alg0.1}}
			\subfigure[$\varepsilon=10$]{\includegraphics[width=0.32\textwidth]{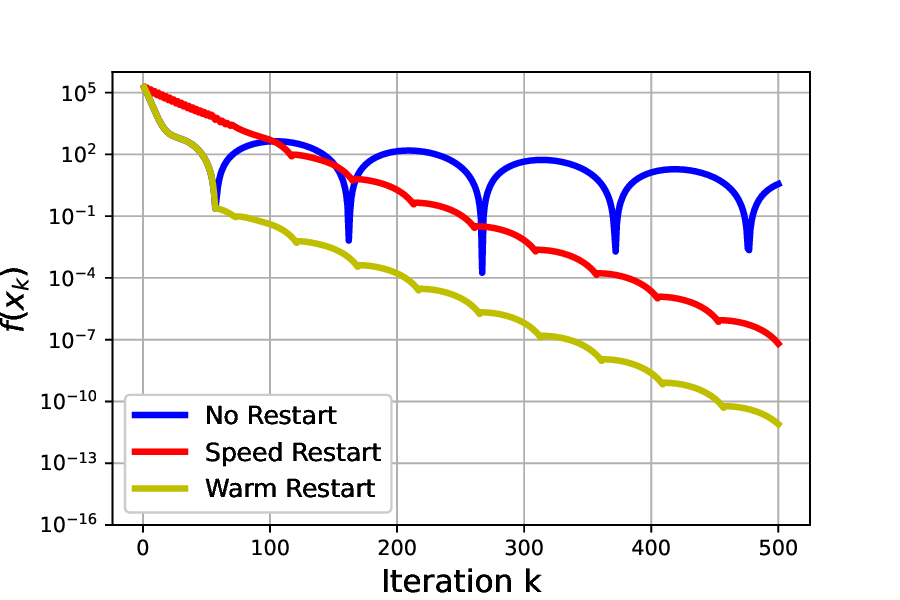}\label{fig:disc_alg10}}
			\subfigure[$\varepsilon=1000$]{\includegraphics[width=0.32\textwidth]{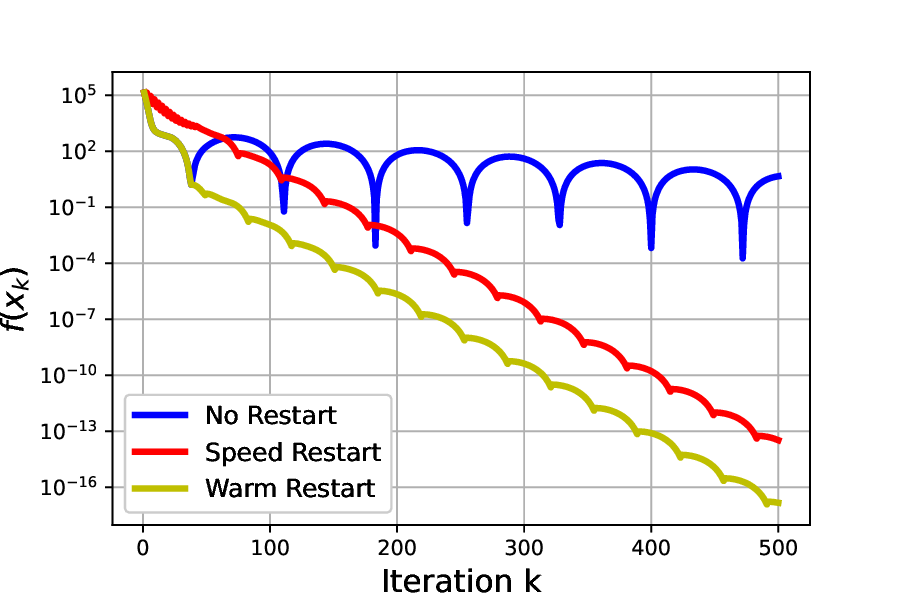}\label{fig:disc_alg1000}}
			\caption{Sequence of objective function values for Example \ref{EG:num_algo}. In all cases, $\alpha=3$ and $\gamma$ is defined by \eqref{eq:gamma_example}, with $i=2$. The initial condition is $x_0=x_1=(1,1,1)$.}
			\label{fig:discrete time}
		\end{figure}

		\begin{table}[h]
			\centering
			\begin{tabular}{cc|c|c}
				\toprule
				&$\varepsilon=10^{-1}$ &$\varepsilon=10$ 
				&$\varepsilon=100$ \\
				\midrule
				$A$ & 1.07e5   & 1.12e5  & 6.83e4\\
				$B$ & 5.46e-2 & 5.55e-2   & 8.52e-2 \\
				\bottomrule
			\end{tabular}
			\caption{Coefficients in the linear regression, when approximating $f\big(x_k\big)\sim Ae^{-Bt}$ in Example \ref{EG:num_algo}.}
			\label{tab:coef_regression_discrete}
		\end{table}

		\textit{Acknowledgments} This research benefited from the support of the FMJH Program Gaspard Monge for optimization and operations research and their interactions with data science. The first author was partially supported by the China Scholarship Council. The second author was partially supported by ANID-Chile grant Exploración 13220097, and Centro de Modelamiento Matemático (CMM) BASAL fund FB210005 for centers of excellence, from ANID-Chile.

		\bibliographystyle{abbrv}
		\bibliography{Bibliography}
		
	\end{document}